\numberwithin{equation}{section}
\def\bW{{\mathbf W}}
\def\bB{{\mathbf B}}
\def\bA{{\mathbf A}} \def\bC{{\mathbf C}}  
 \def\bS{{\mathbf S}} 
 \def\bH{{\mathbf H}} \def\bK{{\mathbf K}} \def\bI{{\mathbf I}}
   \def\bQ{{\mathbf Q}}
 \def\bS{{\mathbf S}}  
 \def\bW{{\mathbf W}} \def\bX{{\mathbb X}} \def\bY{{\mathbb Y}}
\def\bM{\bm M}
\def\bx{{\mathbf x}} \def\by{{\mathbf y}} 
\def\bbeta{{\bm{\beta}}}
\def\hbbeta{\widehat{\bm \beta}}
\def\hsigma{\widehat{\sigma}}
\def\argmin{\mathop{\rm arg\, min}}
\def\real{\mathop{{\rm I}\kern-.2em\hbox{\rm R}}\nolimits}
\def\1overn{\frac{1}{n}}
\def\bel{\begin{eqnarray}\label}  \def\eel{\end{eqnarray}}
\def\bes{\begin{eqnarray*}}  \def\ees{\end{eqnarray*}}
\newtheorem{theorem}{Theorem}[section]
\theoremstyle{definition}
\theoremstyle{remark}
\begin{document}

\title[Shrinkage Estimation in Generalized Ridge Regression]{Shrinkage Estimation Strategies in Generalized Ridge Regression Models Under Low/High-Dimension Regime}

%\author[short version for running head]{name for top GRR paper}

\author{Bahad{\i}r Y\"{u}zba\c{s}{\i}$^\dag$, Mohammad Arashi$^\ddag$ \and S.Ejaz Ahmed$^\S$}

\date{\today}
\maketitle

{\footnotesize
\center { \text{  $^\dag$Department of Econometrics}\par
  { \text{ Inonu University, Turkey}}\par
  { \texttt{E-mail address:b.yzb@hotmail.com}}
  \par

  \vskip 0.2 cm
  
  \text{  $^\ddag$Department of Statistics, Shahrood University of Technology, Iran}\par
  { \text{Department of Statistic, University of Pretoria, South Africa}}\par
  { \texttt{E-mail address:m\_arashi\_stat@yahoo.com}}

  \vskip 0.2 cm
  
  \text{  $^\S$Department of Mathematics and Statistics}\par
  { \text{Brock University, Canada}}\par
  { \texttt{E-mail address:sahmed5@brocku.ca}}

}}

\renewcommand{\thefootnote}{}
\footnote{2010  {\it AMS Mathematics Subject Classification:}
62J05, 62J07. }

\footnote {Key words and phrases:  Generalized Ridge Regression, Pretest Estimation, Shrinkage Estimation, Multicollinearity, Penalty Estimation, Low and High Dimensional Data.\par

 Corresponding author : Bahad{\i}r Y\"{u}zba\c{s}{\i} \par

%This study is supported by $\dots$ 
}

\date{\today}

\begin{abstract}

In this study, we propose shrinkage methods based on {\it generalized ridge regression} (GRR) estimation which is suitable for both multicollinearity and high dimensional problems with small number of samples (large $p$, small $n$). Also, it is obtained theoretical properties of the proposed estimators for Low/High Dimensional cases. Furthermore, the performance of the listed estimators is demonstrated by both simulation studies and real-data analysis, and compare its performance with existing penalty methods. We show that the proposed methods compare well to competing regularization techniques.

\end{abstract}

\maketitle

\section{Introduction}
\noindent

In a multiple linear regression model is used by data analysts in nearly every field of science and technology as well as economics, econometrics, finance, medicine, engineering. In such models,  however, the predictors may or may not dependent of each other. When there exists the near-linear relationships among the predictors, the problem of multicollinearity or collinearity occurs. Furthermore, the {\it least squares estimate} (LSE) will be unbiased, but its variances is large so it may be far from the true value. In order to deal with this problem, one of the best way, in literature, is the {\it Ordinary Ridge Regression} (ORR) by \citet{Ho-Ke1970} technique which adds a degree of bias to the regression estimates for reducing the standard errors.

On the other hand, along with technological innovations, it is not unusual to see the number of covariates ($p$) greatly exceed the number of observations ($n$), which is known as ``large $p$, small $n$'' problem, e.g. micro-array data analysis, environmental pollution studies. The availability of massive data has occurred new problems in statistical analysis. With many predictors, fitting the full model without penalization will result in large prediction intervals, and the LSE may not uniquely exist. Many approaches have been proposed for tackling this problem such as the ORR \citet{Ho-Ke1970}, the {\it Elastic-Net} (Enet) method \citet{ZouHastie2005}, the {\it least absolute shrinkage and selection operation} (Lasso) \index{Lasso} \citet{Tibshirani1996}, the {\it smoothly clipped
absolute deviation method} (SCAD) \citet{FanLi2001}\index{SCAD}, the {\it adaptive Lasso} (aLasso)
\citet{Zou2006}\index{Adaptive Lasso}, and {\it minimax concave
penalty} (MCP) \citet{Zhang2010}\index{MCP} and the {\it least angle regression} \citet{Efron2004} are some approaches.

\citet{saleh2006} and \citet{Ahmed2014} are excellent sources for a comprehensive overview on shrinkage estimation with uncertain prior information. \citet{rozAras2016} and \citet{bahadir-ahmed2016} developed ride type shrinkage estimation in {\it partial linear models} (PLMs). Also, \citet{wuasar2016} considered a weighted stochastic restricted ridge estimator in such models. Very recently, \citet{yuzbasi-et-al2017} proposed  pretest and shrinkage estimators in ridge regression linear models and compare their performance with some penalty estimators, including some penalty estimators.

In high-dimensional setting, \citet{Ishwaran_Rao2014} demonstrated a geometric approach to study the properties of GRR. \citet{bahadir-ahmed2015} provided some numerical comparison of shrinkage ridge regression estimators. \citet{AhmedYzb2016,AhmedYzb2017} and \citet{yuzbasi-et-al2017b} suggested a Stein-type shrinkage estimation strategy via integration penalty estimators. \citet{gao-et-al2017} proposed a post selection shrinkage estimation strategy based on weighted ridge estimator.

%\citet{Ahmed2014} provided a collection of topics outlining pretest and shrinkage estimation techniques, and recently \citet{bahadir-ahmed2016} suggested ridge-type pretest and shrinkage estimation.

In this paper, we propose a number of shrinkage estimation strategies by using GRR method which is an extension of RR in low/high-dimensional linear models.

In section \ref{sec:SM}, along with the preliminary notions, full and restricted estimators are introduced. All the proposed estimators are defined, and also their theoretical properties are obtained in Section \ref{sec:PrE}. Furthermore, we introduce some shrinkage estimators in the context of high dimensional case in Section \ref{sec:HD}. It is demonstrated the asymptotic properties of listed estimators in this section. In Section \ref{sec:PE}, the penalized estimators, which are used in this paper, are given. A detailed Monte Carlo simulation study are conducted in Section \ref{sec:SS}. Also, two real-life applications are analyzed in Section \ref{sec:RDA}. Finally, Section \ref{sec:con} gives concluding remarks.

%\clearpage
\section{Statistical Model}
\label{sec:SM}
\noindent
Consider the following linear model
\begin{equation}\label{full:model}
\bY=\bX \bm\beta+ \bm \varepsilon, \quad p < n
\end{equation}
where $\bY=(y_1,\dots,y_n)\in\mathbb{R}^{n}$ is a random vector of response variables, $\bX=(\bx_1\dots,\bx_n)'\in\mathbb{R}^{n\times p}$ is the design matrix of full column rank with $\bx_i=(x_{i1},\dots,x_{ip})'$ for $i=1,\dots,n$, $\bbeta=(\beta_1,\dots,\beta_p)'\in\mathbb{R}^{p}$ is unknown parameters and $\bm \varepsilon=(\varepsilon_1,\dots,\varepsilon_n)'\in\mathbb{R}^{n}$ is the random error vector such that $\mathbb{E}(\varepsilon_i) = 0$ and $\mathbb{V}(\varepsilon_i)=\sigma_0^2>0$.

For the model \ref{full:model}, the LSE of $\bbeta$ is given by
\begin{equation}
\bm{\widehat{\beta}}_n^{\textrm{\rm LSE}}=\left( \bX'\bX\right)^{-1}\bX'\bY=\bS^{-1}\bX'\bY,
\end{equation}
which is the {\it best linear unbiased estimator} (BLUE) of the coefficient $\bbeta$ under the Gauss--Markov theorem. 
The LSE estimator loses its efficiency, however, when the multicollinearity is occur in the model \ref{full:model}. To tackle this problem,
there is a large literature of estimators. One of the effective estimations among them, \cite{Ho-Ke1970} introduced 
the ORR estimation of $\bbeta$, which has lower the {\it mean square error} (MSE) than the LSE, is given by
\begin{equation}
\bm{\widehat{\beta}}_n^{\textrm{ORR}}=\left( \bX'\bX+k\bI_p\right)^{-1}\bX'\bY=\bS_k^{-1}\bX'\bY,
\end{equation}
where $k>0$ is the ridge tuning parameter in order to tend to reduce the magnitude of the estimated regression
coefficients, leading to fewer effective model parameters. The ORR can be helpful for avoiding over--fitting and in improving numerical stability in the case of an
ill-conditioned $\bS$ matrix.  The GRR estimation of $\bbeta$ is given by
\begin{equation}
\bm{\widehat{\beta}}_n^{\textrm{\rm GRR}}=\left( %\bPS\bQ\bK\bQ'\right)^{-1}\bX'\bY=\bS_{\bK}^{-1}\bX'\bY,
\bS+\bK\right)^{-1}\bX'\bY=\bS_{\bK}^{-1}\bX'\bY,
\label{equ:GRR}
\end{equation}
where $\bK=diag(k_1,\dots,k_p)$. In the GRR, $p$ ridge parameters need to be determined while the ORR requires determination of one parameter only. The GRR can have outstanding performance the ORR if the ridge matrix is appropriately selected. Hence, the GRR is more conservative, outperforms the ORR in view of weighted quadratic loos measure. \cite{Ho-Ke1970} suggest to select $k_j=\sigma^2/\bbeta_j^2$, $j=1,\dots,p$.

In this study, our primary interest is to estimate the regression coefficient $\bbeta$ when it is a priori suspected
but not certain that $\bbeta$ may be restricted to the subspace
\begin{equation}
\rm H_0: {\bH}\bm{\beta }=\bold0, \label{UPI}
\end{equation}
where $\bH$ is a $q\times p$ known matrix of full rank $q(<p)$.
% and $\bh$ is a $q\times1$ vector of known constants.

When the null hypothesis it true, we estimate the restricted GRR (RGRR) estimator of $\bbeta$ is given by
\begin{equation}
\bm{\widehat{\beta}}_n^{\textrm{\rm RGRR}}=\left( \bI+\bS_\bK^{-1}\bH'(\bH\bS_\bK^{-1}\bH')^{-1}\bH\right)\bm{\widehat{\beta}}_n^{\textrm{\rm GRR}}=\bM_\bK\bm{\widehat{\beta}}_n^{\textrm{\rm GRR}}.
\label{equ:RGRR}
\end{equation}

\section{Proposed estimators}
\label{sec:PrE}
Now, we consider some shrinkage and pretest strategies to combine the information from GRR and RGRR.
First, we define the {\it Linear Shrinkage} (LS) estimator of $\bm{\beta}$ as follows:
\begin{eqnarray}
\hbbeta_n^{\textrm{LS}}&=&\omega \hbbeta_n^{\textrm{\rm RGRR}}+\left (1-\omega \right )\hbbeta_n^{\textrm{\rm GRR}}, \\ \label{LS}
& = & \hbbeta_n^{\textrm{\rm GRR}} - \omega \left (\hbbeta_n^{\textrm{\rm GRR}}-\hbbeta_n^{\textrm{\rm RGRR}} \right ) \nonumber
\end{eqnarray}
where $\omega \in [0, 1]$ denotes the shrinkage intensity. Ideally, the coefficient $\omega$ is chosen to minimize the mean squared error. This estimator suffers from the lack of motivation. In other words, one may suspect the hypothesis $H_0:\bH\bbeta=\bold0$ holds in application. Thus, we can decide on the use of any $\hbbeta_n^{\textrm{\rm GRR}}$ or $\hbbeta_n^{\textrm{\rm RGRR}}$ depending the output of testing $H_0$. From \citet{saleh2006}, we use $\mathscr{W}_{n}$ as an appropriate test statistic to test \eqref{UPI}, which is defined by
\begin{equation*}
\mathscr{W}_{n}=\frac{1}{q\widehat{\sigma}^{2}} (\bH{\hbbeta}_n^{\textrm{\rm LSE}})'(\bH \bS^{-1}\bH')^{-1}(\bH{\hbbeta}_n^{\textrm{\rm LSE}}),
\end{equation*}
where $\widehat{\sigma}^{2}=\frac{1}{m}(\bY-\bX\bm{\widehat{\beta}}_n^{\textrm{\rm LSE}})'(\bY-\bX\bm{\widehat{\beta}}_n^{\textrm{\rm LSE}})$, $m=n-p$.

Using Theorem 7.1.2.1 of \citet{saleh2006}, under $H_0$, $\mathscr{W}_{n}$ has the $\mathscr{F}$-distribution with $(q,m)$ {\it degrees of freedom} (d.f.) and under the alternative hypothesis $H_A:{\bH}\bm{\beta}\neq{\bold0}$, it has the non-central $\mathscr{F}$-distribution with the same d.f. and non-centrality parameter $\Delta^2=(\bH\bbeta)'(\bH \bS^{-1}\bH')^{-1}(\bH\bbeta)/\sigma^2$.

Now, in general, we can formulate shrinkage estimators as
\begin{equation*}
{\hbbeta}_n^{\textrm{Shrinkage}}=\hbbeta_n^{\textrm{\rm GRR}}-\left (\hbbeta_n^{\textrm{\rm GRR}}-\hbbeta_n^{\textrm{\rm RGRR}}  \right )g(\mathscr{W}_n),
\end{equation*}
where $g(\cdot)$ is a suitably chosen Borel measurable function. Specific choices of $g(\cdot)$ will give reasonable and useful shrinkage estimators. For $g(\mathscr{W}_n)=0$, we get $\hbbeta_n^{\rm GRR}$ while for $g(\mathscr{W}_n)=1$ we get $(\hbbeta_n^{\rm RGRR})'$. In what follows, we list some other candidates.

The {\it preliminary test} (PT) estimator of $\bm{\beta}$ is obtained by taking $g(\mathscr{W}_{n})=\textrm{I}\left(\mathscr{W}_{n}\leq  \mathscr{F}_{q,m}(\alpha)\right)$ as
\begin{equation}
\label{beta_PT}
{\hbbeta}_n^{\textrm{PT}}=\hbbeta_n^{\textrm{\rm GRR}}-\left (\hbbeta_n^{\textrm{\rm GRR}}-\hbbeta_n^{\textrm{\rm RGRR}}  \right ) \textrm{I}\left(\mathscr{W}_{n}\leq  \mathscr{F}_{q,m}(\alpha)\right),
\end{equation}%
where $\textrm{I}\left(A\right)$ is the indicator function of the set $A$.

\citet{Ahmed1992} proposed a {\it shrinkage pretest estimation} (SPT) strategy replacing $\hbbeta_n^{\textrm{\rm RGRR}}$ by $\hbbeta_n^{\textrm{LS}}$ in \eqref{beta_PT} as follows:
\begin{equation*}
\hbbeta_n^{\textrm{SPT}}= \hbbeta_n^{\textrm{\rm GRR}}-\omega\left (\hbbeta_n^{\textrm{\rm GRR}}-\hbbeta_n^{\textrm{\rm RGRR}}  \right )\textrm{I}\left(\mathscr{W}_{n}\leq  \mathscr{F}_{q,m}(\alpha)\right).
\end{equation*}
In this case, $g(\mathscr{W}_n)=\omega \textrm{I}\left(\mathscr{W}_{n}\leq  \mathscr{F}_{q,m}(\alpha)\right)$.

Taking $g(\mathscr{W}_{n})=d\mathscr{W}_n^{-1}$, yields the {\it Stein-type shrinkage} (S) estimator, which is a combination of the overfitted model estimator $\bm{\widehat{\beta}}_n^{\textrm{\rm GRR}}$ with the under-fitted $\bm{\widehat{\beta}}_n^{\textrm{\rm RGRR}}$, given by
\begin{equation*}
\bm{\widehat{\beta}}_n^{\textrm{S}}=\bm{\widehat{\beta}}_n^{\textrm{\rm GRR}}-d\left( \bm{\widehat{\beta}}_n^{\textrm{\rm GRR}}-\bm{\widehat{\beta}}_n^{\textrm{\rm RGRR}}\right)\mathscr{W}_{n}^{-1} \text{, } d=(q-2)m/q(m+2),
\end{equation*}

In an effort to avoid the over-shrinking problem inherited by $\bm{\widehat{\beta}}_n^{\textrm{S}}$ we suggest using the {\it positive part} (PS) of the S estimator defined by
\begin{eqnarray*}
\hbbeta_n^{\textrm{PS}}&=&\hbbeta_n%
^{\textrm{\rm RGRR}}+\left( 1-d\mathscr{W}_{n}^{-1}\right)\textrm{I}\left(\mathscr{W}_{n} >  d\right)\left( \hbbeta_n^{\textrm{\rm GRR}}-\bm{%
\widehat{\beta }}_n^{\textrm{\rm RGRR}}\right)\cr
&=&\bm{\widehat{\beta}}_n^{\textrm{S}}-\left( \hbbeta_n^{\textrm{\rm GRR}}-\bm{%
\widehat{\beta }}_n^{\textrm{\rm RGRR}}\right)
\left( 1-d\mathscr{W}_{n}^{-1}\right)\textrm{I}\left(\mathscr{W}_{n}\leq  d\right)
\end{eqnarray*}%
%where $z^+=\max(0,z)$.
where we used $g(\mathscr{W}_n)=d\mathscr{W}_n^{-1}+\left( 1-d\mathscr{W}_{n}^{-1}\right)\textrm{I}\left(\mathscr{W}_{n}\leq  d\right)$.

If we replace $\hbbeta_n^{\textrm{\rm GRR}}$ by $\hbbeta_n^{\textrm{S}}$ in $\hbbeta_n^{\textrm{PT}}$, we obtain the {\it improved pretest} (IPT) estimator of $\bm{\beta}$ defined by
\begin{eqnarray*}
\hbbeta_n^{\textrm{IPT}}&=&\hbbeta_n^{\textrm{S}}-\left (\hbbeta_n^{\textrm{S}}-\hbbeta_n^{\textrm{\rm RGRR}}  \right ) \textrm{I}\left(\mathscr{W}_{n}\leq  \mathscr{F}_{q,m}(\alpha)\right), \\
&=& \hbbeta_n^{\textrm{S}}- \left (\hbbeta_n^{\textrm{\rm GRR}}-\hbbeta_n^{\textrm{\rm RGRR}}  \right )
\left ( 1- d\mathscr{W}_{n}^{-1}\right )%
\textrm{I}\left(\mathscr{W}_{n} \leq  \mathscr{F}_{q,m}(\alpha)\right),
\end{eqnarray*}%
where we used $g(\mathscr{W}_n)=d\mathscr{W}_n^{-1}+\left( 1-d\mathscr{W}_{n}^{-1}\right)\textrm{I}\left(\mathscr{W}_{n}\leq  \mathscr{F}_{q,m}(\alpha)\right)$.

We also consider the performance of the listed methods in the following section.

\subsection{Properties}
Let $\bbeta_n^*$ denote any of the five estimators of $\bbeta$ defined in previous section. The bias of $\bbeta_n^*$ is defined by ${\rm\bB}(\bbeta_n^*)=\mathbb{E}(\bbeta^*-\bbeta)$. Considering the loss function ${\rm L}(\bbeta;\bbeta_n^*)=(\bbeta_n^*-\bbeta)'\bW(\bbeta_n^*-\bbeta)$, for a suitably positive definite weight matrix $\bW$, the associated quadratic risk function is defined by ${\rm R}(\bbeta_n^*)=\mathbb{E}({\rm L}(\bbeta;\bbeta_n^*))$. In the following results we give the bias and quadratic risk functions of the proposed five estimators of $\bbeta$.
\begin{theorem}
\label{bias} 
The bias of the estimators $\hbbeta_n^{\rm GRR}$, $\hbbeta_n^{\rm RGRR}$, $\hbbeta_n^{\rm LS}$, $\hbbeta_n^{\rm PT}$, $\hbbeta_n^{\rm SPT}$, $\hbbeta_n^{\rm S}$, $\hbbeta_n^{\rm PS}$ and $\hbbeta_n^{\rm IPT}$ are respectively given by
\begin{eqnarray*}
{\rm\bB}(\hbbeta_n^{\rm GRR})&=&(\bS_{\bK}^{-1}\bS-\bI_p)\bbeta\cr
{\rm\bB}(\hbbeta_n^{\rm RGRR})&=&(\bM_{\bK}\bS_{\bK}^{-1}\bS-\bI_p)\bbeta\cr
{\rm\bB}(\hbbeta_n^{\rm LS})&=&(\bS_{\bK}^{-1}\bS-\bI_p)\bbeta-\omega(\bI_p-\bM_{\bK})\bS_{\bK}^{-1}\bS\bbeta\cr
{\rm\bB}(\hbbeta_n^{\rm PT})&=&\bS_{\bK}^{-1}\bS\bbeta\left[1-\mathbb{G}_{q+2,m}(c_\alpha;\Delta^2)\right]\cr
&&+\bM_{\bK}\bS_{\bK}^{-1}\bS\bbeta \mathbb{G}_{q+2,m}(c_\alpha;\Delta^2)\cr
{\rm\bB}(\hbbeta_n^{\rm SPT})&=&\bS_{\bK}^{-1}\bS\bbeta\left[1-\omega \mathbb{G}_{q+2,m}(c_\alpha;\Delta^2)\right]\cr
&&+\omega\bM_{\bK}\bS_{\bK}^{-1}\bS\bbeta G_{q+2,m}(c_\alpha;\Delta^2)\cr
{\rm\bB}(\hbbeta_n^{\rm S})&=&\bS_{\bK}^{-1}\bS\bbeta\left[1-d \mathbb{E}(\mathscr{F}^{-1}_{q+2,m}(\Delta^2))\right]\cr
&&+d\bM_{\bK}\bS_{\bK}^{-1}\bS\bbeta \mathbb{E}(\mathscr{F}^{-1}_{q+2,m}(\Delta^2))\cr
{\rm\bB}(\hbbeta_n^{\rm PS})&=&{\rm\bB}(\hbbeta_n^{\rm S})-(\bI_p-\bM_{\bK})\bS_{\bK}^{-1}\bS\mathbb{G}_{q+2,m}(d_1;\Delta^2)\cr
&&+d_1(\bI_p-\bM_{\bK})\bS_{\bK}^{-1}\bS\bbeta\mathbb{E}[\mathscr{F}^{-1}_{q+2,m}\textrm{I}(\mathscr{F}_{q+2,m}(\Delta^2)\leq d_1)]\cr
{\rm\bB}(\hbbeta_n^{\rm IPT})&=&{\rm\bB}(\hbbeta_n^{\rm S})-(\bI_p-\bM_{\bK})\bS_{\bK}^{-1}\bS\mathbb{G}_{q+2,m}(c_\alpha;\Delta^2)\cr
&&+d(\bI_p-\bM_{\bK})\bS_{\bK}^{-1}\bS\bbeta\mathbb{E}[\mathscr{F}^{-1}_{q+2,m}\textrm{I}(\mathscr{F}_{q+2,m}(\Delta^2)\leq \mathscr{F}_{q+2,m}(\alpha))],
\end{eqnarray*}
where $\mathbb{G}_{\vartheta_1,\vartheta_2}(\cdot;\Delta^2)$ is the cdf of a non-central $\mathscr{F}$-distribution with $(\vartheta_1, \vartheta_2)$ degree of freedom and non-centrality parameter $\Delta^2/2$.
\end{theorem}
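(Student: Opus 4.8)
The plan is to exploit the common representation shared by the linear-shrinkage estimator and the five nonlinear ones, namely $\hbbeta_n^{\textrm{Shrinkage}}=\hbbeta_n^{\rm GRR}-(\hbbeta_n^{\rm GRR}-\hbbeta_n^{\rm RGRR})\,g(\mathscr{W}_n)$, so that computing all eight biases collapses to evaluating $\mathbb{E}(\hbbeta_n^{\rm GRR})$ and a single family of mixed expectations $\mathbb{E}[(\hbbeta_n^{\rm GRR}-\hbbeta_n^{\rm RGRR})\,g(\mathscr{W}_n)]$ for the relevant $g$. First I would dispose of the three linear cases. Since $\hbbeta_n^{\rm GRR}=\bS_\bK^{-1}\bX'\bY$ and $\mathbb{E}(\bY)=\bX\bbeta$, one gets $\mathbb{E}(\hbbeta_n^{\rm GRR})=\bS_\bK^{-1}\bS\bbeta$, which yields the GRR bias at once; premultiplying by $\bM_\bK$ and using $\hbbeta_n^{\rm RGRR}=\bM_\bK\hbbeta_n^{\rm GRR}$ gives the RGRR bias, and substituting $\hbbeta_n^{\rm LS}=\hbbeta_n^{\rm GRR}-\omega(\bI_p-\bM_\bK)\hbbeta_n^{\rm GRR}$ into $\mathbb{E}(\cdot)$ gives the third by linearity. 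No distributional input beyond $\mathbb{E}(\bY)=\bX\bbeta$ is needed here.

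For the remaining estimators the multiplier $g(\mathscr{W}_n)$ is stochastically dependent on the estimator difference, and the engine of the proof is a Judge--Bock/Stein lemma of the type catalogued in \citet{saleh2006}. Assuming $\bm\varepsilon$ Gaussian (as required for the exact $\mathscr{F}$ law of $\mathscr{W}_n$ invoked earlier), $\hbbeta_n^{\rm LSE}\sim N_p(\bbeta,\sigma^2\bS^{-1})$ is independent of $\hsigma^2$, and by Theorem 7.1.2.1 of \citet{saleh2006} the statistic $\mathscr{W}_n$ follows a non-central $\mathscr{F}_{q,m}$ law with non-centrality $\Delta^2$. Writing $\hbbeta_n^{\rm GRR}-\hbbeta_n^{\rm RGRR}=(\bI_p-\bM_\bK)\hbbeta_n^{\rm GRR}$, the lemma I would invoke asserts that, for any Borel $\phi$,
\begin{equation*}
\mathbb{E}\!\left[(\bI_p-\bM_\bK)\hbbeta_n^{\rm GRR}\,\phi(\mathscr{W}_n)\right]=(\bI_p-\bM_\bK)\bS_\bK^{-1}\bS\bbeta\;\mathbb{E}\!\left[\phi\!\left(\mathscr{F}_{q+2,m}(\Delta^2)\right)\right],
\end{equation*}
the signature being that the mean direction factors out while the numerator degrees of freedom are raised from $q$ to $q+2$. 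This shift is exactly what produces the subscript $q+2$ in every formula; it originates in the Poisson-mixture representation of the non-central density, where the Stein identity trades a factor of the Gaussian score for a two-dimensional increase of the chi-square index.

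Granting this lemma, the five nonlinear biases are read off by substituting the appropriate $g$, using $\mathbb{E}[\textrm{I}(\mathscr{W}_n\le t)\text{-part}]=\mathbb{G}_{q+2,m}(\cdot;\Delta^2)$ and $\mathbb{E}[\mathscr{W}_n^{-1}\text{-part}]=\mathbb{E}[\mathscr{F}_{q+2,m}^{-1}(\Delta^2)]$. The preliminary-test and shrinkage-preliminary-test cases take $\phi=\textrm{I}(\cdot\le c_\alpha)$ (with the extra factor $\omega$ in the latter); the Stein case takes $\phi=d(\cdot)^{-1}$; and for the positive-part and improved-pretest estimators I would split $g$ into its Stein term plus the correction $(1-d\mathscr{W}_n^{-1})\textrm{I}(\mathscr{W}_n\le t)$ and apply the lemma term by term. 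The indicator thresholds transform under the $q\mapsto q+2$ shift (yielding $d_1$ for the positive part and $\mathscr{F}_{q+2,m}(\alpha)$ for the improved pretest), which generates the mixed contributions $\mathbb{G}_{q+2,m}(\cdot;\Delta^2)$ and $\mathbb{E}[\mathscr{F}_{q+2,m}^{-1}\textrm{I}(\cdots)]$. Collecting $\mathbb{E}(\hbbeta_n^{\rm GRR})$ minus these pieces reproduces the displayed expressions.

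The main obstacle is the lemma itself, and in particular the clean factorization of the mean direction in the \emph{generalized} ridge setting. Unlike the ordinary least-squares case, the difference $\hbbeta_n^{\rm GRR}-\hbbeta_n^{\rm RGRR}=-\bS_\bK^{-1}\bH'(\bH\bS_\bK^{-1}\bH')^{-1}\bH\hbbeta_n^{\rm GRR}$ is driven by the ridge-reweighted normal vector $\bH\hbbeta_n^{\rm GRR}=\bH\bS_\bK^{-1}\bS\hbbeta_n^{\rm LSE}$, whereas $\mathscr{W}_n$ is built from $\bH\hbbeta_n^{\rm LSE}$; these are correlated but not collinear, so the naive Judge--Bock identity does not apply verbatim. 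I expect the resolution to require conditioning first on $\hsigma^2$ and then on $\bH\hbbeta_n^{\rm LSE}$, writing $\mathbb{E}[\bH\hbbeta_n^{\rm GRR}\mid \bH\hbbeta_n^{\rm LSE}]$ as an affine function of $\bH\hbbeta_n^{\rm LSE}$ and checking that the induced cross term, after premultiplication by $\bS_\bK^{-1}\bH'(\bH\bS_\bK^{-1}\bH')^{-1}$, collapses onto the stated mean direction; equivalently one may pass to the canonical coordinates simultaneously diagonalizing $\bS$ and $\bK$, where the reweighting becomes scalar per coordinate and the $q\mapsto q+2$ shift is transparent. Once that bookkeeping is settled, the remaining steps are pure substitution.
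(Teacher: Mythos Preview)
Your plan is essentially the paper's own proof: write every estimator as $\hbbeta_n^{\rm GRR}-(\bI_p-\bM_{\bK})\hbbeta_n^{\rm GRR}\,g(\mathscr{W}_n)$, handle the linear cases by $\mathbb{E}(\hbbeta_n^{\rm GRR})=\bS_{\bK}^{-1}\bS\bbeta$, and feed the rest through a Judge--Bock identity that shifts $q\mapsto q+2$. The paper cites Theorem~1 in Appendix~B of \citet{JB1978} for $\mathbb{E}[\hbbeta_n^{\rm LSE}g(\mathscr{W}_n)]=\bbeta\,\mathbb{E}[g(\mathscr{F}_{q+2,m}(\Delta^2))]$ and then simply substitutes the relevant $g$.

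The one place you diverge is the final paragraph, where you manufacture an obstacle that the paper dissolves in a single line. Since $\hbbeta_n^{\rm GRR}=\bS_{\bK}^{-1}\bS\,\hbbeta_n^{\rm LSE}$ with $\bS_{\bK}^{-1}\bS$ \emph{deterministic} (the ridge matrix $\bK$ is fixed), one has
\[
(\bI_p-\bM_{\bK})\hbbeta_n^{\rm GRR}=(\bI_p-\bM_{\bK})\bS_{\bK}^{-1}\bS\,\hbbeta_n^{\rm LSE},
\]
and the entire prefactor $(\bI_p-\bM_{\bK})\bS_{\bK}^{-1}\bS$ comes outside the expectation. What remains is $\mathbb{E}[\hbbeta_n^{\rm LSE}g(\mathscr{W}_n)]$, which is precisely the classical LSE/Judge--Bock setting---no ridge reweighting is left inside, no conditioning on $\bH\hbbeta_n^{\rm LSE}$ or simultaneous diagonalization is needed. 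Your worry that $\bH\hbbeta_n^{\rm GRR}$ and $\bH\hbbeta_n^{\rm LSE}$ are ``correlated but not collinear'' is irrelevant because the expectation you must evaluate never involves $\bH\hbbeta_n^{\rm GRR}$ once the deterministic factor is pulled out. Drop that last paragraph and the proof is complete.
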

For the proof, see the Appendix.

\begin{theorem} 
\label{risk}
The quadratic risk function of the estimators $\hbbeta_n^{\rm GRR}$, $\hbbeta_n^{\rm RGRR}$, $\hbbeta_n^{\rm LS}$, $\hbbeta_n^{\rm PT}$, $\hbbeta_n^{\rm SPT}$, $\hbbeta_n^{\rm S}$, $\hbbeta_n^{\rm PS}$ and $\hbbeta_n^{\rm IPT}$ are respectively given by
\begin{eqnarray*}
{\rm R}(\hbbeta_n^{\rm GRR})&=&\sigma^2{\rm tr}\left(\bS_{\bK}^{-1}\bS\bS_{\bK}^{-1}\bW\right)+\bbeta'(\bS_{\bK}^{-1}\bS-\bI_p)\bW(\bS\bS_{\bK}^{-1}-\bI_p)\bbeta\cr
{\rm R}(\hbbeta_n^{\rm RGRR})&=&\sigma^2{\rm tr}\left(\bS_{\bK}^{-1}\bS\bS_{\bK}^{-1}\bW\right)+\bbeta'(\bS_{\bK}^{-1}\bS-\bI_p)\bW(\bS\bS_{\bK}^{-1}-\bI_p)\bbeta\cr
&&-2{\rm tr}(\bB)-2\bbeta'\bB\bbeta+2\bbeta'\bW(\bI_p-\bM_{\bK})\bS_{\bK}^{-1}\bS\bbeta\cr
&&+{\rm tr}(\bC)+\bbeta'\bC\bbeta\cr
{\rm R}(\hbbeta_n^{\rm LS})&=&\sigma^2{\rm tr}\left(\bS_{\bK}^{-1}\bS\bS_{\bK}^{-1}\bW\right)+\bbeta'(\bS_{\bK}^{-1}\bS-\bI_p)\bW(\bS\bS_{\bK}^{-1}-\bI_p)\bbeta\cr
&&-2\omega{\rm tr}(\bB)-2\omega\bbeta'\bB\bbeta+2\omega\bbeta'\bW(\bI_p-\bM_{\bK})\bS_{\bK}^{-1}\bS\bbeta\cr
&&+\omega^2{\rm tr}(\bC)+\omega^2\bbeta'\bC\bbeta\cr
{\rm R}(\hbbeta^{\rm  PT})&=&\sigma^2{\rm tr}\left(\bS_{\bK}^{-1}\bS\bS_{\bK}^{-1}\bW\right)+\bbeta'(\bS_{\bK}^{-1}\bS-\bI_p)\bW(\bS\bS_{\bK}^{-1}-\bI_p)\bbeta\cr
&&-2{\rm tr}(\bB)\mathbb{G}_{q+2,m}(c_\alpha;\Delta^2)-2\bbeta'\bB\bbeta\mathbb{G}_{q+4,m}(c_\alpha;\Delta^2)\cr
&&+2\bbeta'\bW(\bI_p-\bM_{\bK})\bS_{\bK}^{-1}\bS\bbeta\mathbb{G}_{q+2,m}(c_\alpha;\Delta^2)\cr
&&+{\rm tr}(\bC)\mathbb{G}_{q+2,m}(c_\alpha;\Delta^2)+\bbeta'\bC\bbeta\mathbb{G}_{q+4,m}(c_\alpha;\Delta^2)\cr
{\rm R}(\hbbeta_n^{\rm SPT})&=&\sigma^2{\rm tr}\left(\bS_{\bK}^{-1}\bS\bS_{\bK}^{-1}\bW\right)+\bbeta'(\bS_{\bK}^{-1}\bS-\bI_p)\bW(\bS\bS_{\bK}^{-1}-\bI_p)\bbeta\cr
&&-2\omega{\rm tr}(\bB)\mathbb{G}_{q+2,m}(c_\alpha;\Delta^2)-2\omega\bbeta'\bB\bbeta\mathbb{G}_{q+4,m}(c_\alpha;\Delta^2)\cr
&&+2\omega\bbeta'\bW(\bI_p-\bM_{\bK})\bS_{\bK}^{-1}\bS\bbeta\mathbb{G}_{q+2,m}(c_\alpha;\Delta^2)\cr
&&+\omega^2{\rm tr}(\bC)\mathbb{G}_{q+2,m}(c_\alpha;\Delta^2)+\omega^2\bbeta'\bC\bbeta\mathbb{G}_{q+4,m}(c_\alpha;\Delta^2)\cr
{\rm R}(\hbbeta_n^{\rm S})&=&\sigma^2{\rm tr}\left(\bS_{\bK}^{-1}\bS\bS_{\bK}^{-1}\bW\right)+\bbeta'(\bS_{\bK}^{-1}\bS-\bI_p)\bW(\bS\bS_{\bK}^{-1}-\bI_p)\bbeta\cr
&&-2d{\rm tr}(\bB)\mathbb{E}[\mathscr{F}^{-1}_{q+2,m}(\Delta^2)]-2d\bbeta'\bB\bbeta\mathbb{E}[\mathscr{F}^{-1}_{q+4,m}(\Delta^2)]\cr
&&+2d\bbeta'\bW(\bI_p-\bM_{\bK})\bS_{\bK}^{-1}\bS\bbeta\mathbb{E}\left[\mathscr{F}^{-1}_{q+2,m}(\Delta^2)\right]\cr
&&+d^2{\rm tr}(\bC)\mathbb{E}[\mathscr{F}^{-2}_{q+2,m}(\Delta^2)]+d^2\bbeta'\bC\bbeta\mathbb{E}[\mathscr{F}^{-2}_{q+4,m}(\Delta^2)]\cr
{\rm R}(\hbbeta_n^{\rm PS})&=&{\rm R}(\hbbeta_n^{\rm S})\cr
&&+\sigma^2{\rm tr}\left(\bS_{\bK}^{-1}\bS\bS_{\bK}^{-1}\bW\right)+\bbeta'(\bS_{\bK}^{-1}\bS-\bI_p)\bW(\bS\bS_{\bK}^{-1}-\bI_p)\bbeta\cr
&&-2{\rm tr}(\bB)\mathbb{E}[\left(1-d\mathscr{F}^{-1}_{q+2,m}(\Delta^2)\right)I(\mathscr{F}^{-1}_{q+2,m}(\Delta^2)\leq d_1)]\cr
&&-2\bbeta'\bB\bbeta\mathbb{E}[\left(1-d\mathscr{F}^{-1}_{q+4,m}(\Delta^2)\right)I(\mathscr{F}^{-1}_{q+4,m}(\Delta^2)\leq d_1)]\cr
&&+2\bbeta'\bW(\bI_p-\bM_{\bK})\bS_{\bK}^{-1}\bS\bbeta\mathbb{E}[\left(1-d\mathscr{F}^{-1}_{q+2,m}(\Delta^2)\right)I(\mathscr{F}^{-1}_{q+2,m}(\Delta^2)\leq d_1)]\cr
&&+{\rm tr}(\bC)\mathbb{E}[\left(1-d\mathscr{F}^{-1}_{q+4,m}(\Delta^2)\right)^2I(\mathscr{F}^{-1}_{q+4,m}(\Delta^2)\leq d_1)]\cr
&&+2d{\rm tr}(\bC)\mathbb{E}[\mathscr{F}^{-1}_{q+4,m}(\Delta^2)\left(1-d\mathscr{F}^{-1}_{q+4,m}(\Delta^2)\right)I(\mathscr{F}^{-1}_{q+4,m}(\Delta^2)\leq d_1)]\cr
&&+\bbeta'\bC\bbeta\mathbb{E}[\left(1-d\mathscr{F}^{-1}_{q+4,m}(\Delta^2)\right)^2I(\mathscr{F}^{-1}_{q+4,m}(\Delta^2)\leq d_1)]\cr
&&+2d\bbeta'\bC\bbeta\mathbb{E}[\mathscr{F}^{-1}_{q+4,m}(\Delta^2)\left(1-d\mathscr{F}^{-1}_{q+4,m}(\Delta^2)\right)I(\mathscr{F}^{-1}_{q+4,m}(\Delta^2)\leq d_1)]\cr
{\rm R}(\hbbeta_n^{\rm IPT})&=&{\rm R}(\hbbeta_n^{\rm S})\cr
&&+\sigma^2{\rm tr}\left(\bS_{\bK}^{-1}\bS\bS_{\bK}^{-1}\bW\right)+\bbeta'(\bS_{\bK}^{-1}\bS-\bI_p)\bW(\bS\bS_{\bK}^{-1}-\bI_p)\bbeta\cr
&&-2{\rm tr}(\bB)\mathbb{E}[\left(1-d\mathscr{F}^{-1}_{q+2,m}(\Delta^2)\right)I(\mathscr{F}^{-1}_{q+2,m}(\Delta^2)\leq \mathscr{F}_{q+2,m}(\alpha))]\cr
&&-2\bbeta'\bB\bbeta\mathbb{E}[\left(1-d\mathscr{F}^{-1}_{q+4,m}(\Delta^2)\right)I(\mathscr{F}^{-1}_{q+4,m}(\Delta^2)\leq \mathscr{F}_{q+4,m}(\alpha))]\cr
&&+2\bbeta'\bW(\bI_p-\bM_{\bK})\bS_{\bK}^{-1}\bS\bbeta\mathbb{E}[\left(1-d\mathscr{F}^{-1}_{q+2,m}(\Delta^2)\right)I(\mathscr{F}^{-1}_{q+2,m}(\Delta^2)\leq \mathscr{F}_{q+2,m}(\alpha))]\cr
&&+{\rm tr}(\bC)\mathbb{E}[\left(1-d\mathscr{F}^{-1}_{q+4,m}(\Delta^2)\right)^2I(\mathscr{F}^{-1}_{q+4,m}(\Delta^2)\leq \mathscr{F}_{q+4,m}(\alpha))]\cr
&&+2d{\rm tr}(\bC)\mathbb{E}[\mathscr{F}^{-1}_{q+4,m}(\Delta^2)\left(1-d\mathscr{F}^{-1}_{q+4,m}(\Delta^2)\right)I(\mathscr{F}^{-1}_{q+4,m}(\Delta^2)\leq \mathscr{F}_{q+4,m}(\alpha))]\cr
&&+\bbeta'\bC\bbeta\mathbb{E}[\left(1-d\mathscr{F}^{-1}_{q+4,m}(\Delta^2)\right)^2I(\mathscr{F}^{-1}_{q+4,m}(\Delta^2)\leq \mathscr{F}_{q+4,m}(\alpha))]\cr
&&+2d\bbeta'\bC\bbeta\mathbb{E}[\mathscr{F}^{-1}_{q+4,m}(\Delta^2)\left(1-d\mathscr{F}^{-1}_{q+4,m}(\Delta^2)\right)I(\mathscr{F}^{-1}_{q+4,m}(\Delta^2)\leq \mathscr{F}_{q+4,m}(\alpha))]
\end{eqnarray*}
\end{theorem}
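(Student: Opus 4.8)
The plan is to handle all eight estimators at once through the single representation
\[
\hbbeta_n^* = \hbbeta_n^{\rm GRR} - \bm\delta_n\, g(\mathscr{W}_n), \qquad \bm\delta_n = \hbbeta_n^{\rm GRR} - \hbbeta_n^{\rm RGRR} = (\bI_p - \bM_{\bK})\hbbeta_n^{\rm GRR},
\]
reading off each case from the appropriate $g$ ($g\equiv 1$ gives RGRR, $g\equiv\omega$ gives LS, $g=\textrm{I}(\mathscr{W}_n\le c_\alpha)$ gives PT, $g=d\mathscr{W}_n^{-1}$ gives S, and so on). Substituting into ${\rm R}(\hbbeta_n^*)=\mathbb{E}[(\hbbeta_n^*-\bbeta)'\bW(\hbbeta_n^*-\bbeta)]$ and expanding the square yields the master decomposition
\[
{\rm R}(\hbbeta_n^*) = {\rm R}(\hbbeta_n^{\rm GRR}) - 2\,\mathbb{E}\big[(\hbbeta_n^{\rm GRR} - \bbeta)'\bW\bm\delta_n\, g(\mathscr{W}_n)\big] + \mathbb{E}\big[\bm\delta_n'\bW\bm\delta_n\, g(\mathscr{W}_n)^2\big].
\]
The leading term is obtained first and directly from $\hbbeta_n^{\rm GRR} = \bS_{\bK}^{-1}\bS\bbeta + \bS_{\bK}^{-1}\bX'\bm\varepsilon$: its covariance $\sigma^2\bS_{\bK}^{-1}\bS\bS_{\bK}^{-1}$ contributes $\sigma^2{\rm tr}(\bS_{\bK}^{-1}\bS\bS_{\bK}^{-1}\bW)$ and its bias $(\bS_{\bK}^{-1}\bS-\bI_p)\bbeta$ contributes the $\bbeta'(\bS_{\bK}^{-1}\bS-\bI_p)\bW(\bS\bS_{\bK}^{-1}-\bI_p)\bbeta$ term common to every line.

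Second, I would pin down the matrices $\bB$ and $\bC$ from the two constant-$g$ cases, where $\mathscr{W}_n$ plays no role. Writing $\hbbeta_n^{\rm GRR}=\bS_{\bK}^{-1}\bS\bbeta+\bm u$ with $\mathbb{E}[\bm u]=\bzero$ and $\mathbb{E}[\bm u\bm u']=\sigma^2\bS_{\bK}^{-1}\bS\bS_{\bK}^{-1}$, the cross and quadratic terms reduce to first and second Gaussian moments of $\bm\delta_n=(\bI_p-\bM_{\bK})\hbbeta_n^{\rm GRR}$. Matching the resulting trace and $\bbeta'(\cdot)\bbeta$ pieces against the stated RGRR line ($g\equiv 1$) fixes $\bB$ and $\bC$ together with the linear term $\bbeta'\bW(\bI_p-\bM_{\bK})\bS_{\bK}^{-1}\bS\bbeta$ once and for all; the LS line then follows immediately, the cross term scaling by $\omega$ and the quadratic term by $\omega^2$.

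Third, for the five estimators with random $g$ the task is to evaluate the same two expectations when $g$ is an indicator of $\mathscr{W}_n$, a multiple of $\mathscr{W}_n^{-1}$, or a truncated combination. The engine is the family of Gaussian quadratic-form identities: for $\bm Z\sim N_q(\bm\mu,\bI_q)$ and measurable $\phi$,
\begin{eqnarray*}
\mathbb{E}[\bm Z\,\phi(\bm Z'\bm Z)] &=& \bm\mu\,\mathbb{E}[\phi(\chi^2_{q+2}(\Delta^2))],\\
\mathbb{E}[\bm Z\bm Z'\,\phi(\bm Z'\bm Z)] &=& \bI_q\,\mathbb{E}[\phi(\chi^2_{q+2}(\Delta^2))] + \bm\mu\bm\mu'\,\mathbb{E}[\phi(\chi^2_{q+4}(\Delta^2))].
\end{eqnarray*}
Under the normal-errors assumption that already underlies the $\mathscr{F}$-law of $\mathscr{W}_n$, the estimator $\hsigma^2$ is built from the residual projection $(\bI_p$-type orthogonal complement of $\bX\bS^{-1}\bX')\bY$ and is therefore independent of every linear function of $\bX'\bY$; dividing the $\chi^2$ variates by $m\hsigma^2/\sigma^2$ turns them into (inverse) non-central $\mathscr{F}$ variates, which is exactly what converts the $q+2$ and $q+4$ chi-squares into the $\mathbb{G}_{q+2,m}(\cdot;\Delta^2)$, $\mathbb{G}_{q+4,m}(\cdot;\Delta^2)$ and $\mathbb{E}[\mathscr{F}^{-j}_{q+2,m}(\Delta^2)]$, $\mathbb{E}[\mathscr{F}^{-j}_{q+4,m}(\Delta^2)]$ factors throughout. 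The trace pieces inherit the $q+2$ shift and the $\bbeta'(\cdot)\bbeta$ pieces the $q+4$ shift, the signature of these formulas. Thus S follows from $\phi(t)=d/t$, PT and SPT from $\phi=\textrm{I}(\cdot\le c_\alpha)$, and PS and IPT are obtained by adding to ${\rm R}(\hbbeta_n^{\rm S})$ the extra cross and squared contributions of the truncation factor $(1-d\mathscr{W}_n^{-1})\textrm{I}(\mathscr{W}_n\le d_1)$ (respectively with $\mathscr{F}_{q,m}(\alpha)$ in place of $d_1$), which only require the truncated versions of the same identities.

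The step I expect to be the main obstacle is the coupling between $\bm\delta_n$, a function of $\bH\hbbeta_n^{\rm GRR}=\bH\bS_{\bK}^{-1}\bX'\bY$, and the statistic $\mathscr{W}_n$, which is built from $\bH\hbbeta_n^{\rm LSE}=\bH\bS^{-1}\bX'\bY$ rather than from the GRR fit. These two Gaussian vectors are jointly normal but not identical, with cross-covariance $\sigma^2\bH\bS_{\bK}^{-1}\bH'$, so the identities above cannot be applied before both quantities are routed through a common standard normal vector. The clean route is to condition on the LSE direction: using the regression decomposition $\mathbb{E}[\bH\hbbeta_n^{\rm GRR}\mid \bH\hbbeta_n^{\rm LSE}]=\bmu+\bH\bS_{\bK}^{-1}\bH'(\bH\bS^{-1}\bH')^{-1}(\bH\hbbeta_n^{\rm LSE}-\bmu)$ with an independent residual, the residual feeds only the trace pieces while the $\bH\hbbeta_n^{\rm LSE}$-part carries $\phi$ and produces the $\Delta^2=(\bH\bbeta)'(\bH\bS^{-1}\bH')^{-1}(\bH\bbeta)/\sigma^2$ non-centrality with the shifted $q+2,q+4$ degrees of freedom. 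Verifying that this regression factor collapses in exactly the combinations appearing in the stated formulas — equivalently, that in the relevant quadratic forms $\bH\hbbeta_n^{\rm GRR}$ and $\bH\hbbeta_n^{\rm LSE}$ may be simultaneously diagonalized and aligned — is the delicate bookkeeping; once that common representation is fixed, the remaining computations are mechanical applications of the identities above.
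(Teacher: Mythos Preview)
Your plan matches the paper's proof almost exactly: the same master decomposition
\[
{\rm R}(\hbbeta_n^*) = {\rm R}(\hbbeta_n^{\rm GRR}) - 2\,\mathbb{E}\big[(\hbbeta_n^{\rm GRR}-\bbeta)'\bW\bm\delta_n\,g(\mathscr{W}_n)\big] + \mathbb{E}\big[\bm\delta_n'\bW\bm\delta_n\,g^2(\mathscr{W}_n)\big],
\]
the same direct computation of ${\rm R}(\hbbeta_n^{\rm GRR})$, and the same appeal to the Judge--Bock / Stein-type identities to produce the $q+2$ and $q+4$ shifts.

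The one place where you diverge is unnecessary. The ``main obstacle'' you flag --- that $\bm\delta_n$ depends on $\bH\hbbeta_n^{\rm GRR}$ while $\mathscr{W}_n$ depends on $\bH\hbbeta_n^{\rm LSE}$ --- dissolves once you use the deterministic identity $\hbbeta_n^{\rm GRR}=\bS_{\bK}^{-1}\bS\,\hbbeta_n^{\rm LSE}$, which the paper invokes at the outset. With this substitution the cross term is simply
\[
(\hbbeta_n^{\rm LSE})'\bB\,\hbbeta_n^{\rm LSE}\,g(\mathscr{W}_n)-\bbeta'\bW(\bI_p-\bM_{\bK})\bS_{\bK}^{-1}\bS\,\hbbeta_n^{\rm LSE}\,g(\mathscr{W}_n),
\]
with $\bB=\bS\bS_{\bK}^{-1}\bW(\bI_p-\bM_{\bK})\bS_{\bK}^{-1}\bS$, and the quadratic term is $(\hbbeta_n^{\rm LSE})'\bC\,\hbbeta_n^{\rm LSE}\,g^2(\mathscr{W}_n)$ with $\bC=\bS\bS_{\bK}^{-1}(\bI_p-\bM_{\bK})'\bW(\bI_p-\bM_{\bK})\bS_{\bK}^{-1}\bS$. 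Both are now functions of the \emph{single} Gaussian vector $\hbbeta_n^{\rm LSE}$ (and the independent $\hsigma^2$), so the Judge--Bock identities apply directly and there is no need for your regression/conditioning detour. This also hands you $\bB$ and $\bC$ explicitly rather than having to back them out from the RGRR line.
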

For the proof, see the Appendix.

%\clearpage
\section{High Dimensional Sparse Regression Models}
\label{sec:HD}
Consider the model \eqref{full:model} when $p>n$. We assume that $\bx_i$ can be decomposed as $\bx_i=(\bx_{iA},\bx_{iB})$ with $\bx_{iA}=(\bx_{i1}\dots,\bx_{i(p-q)})'\in\mathbb{R}^{p-q}$ and $\bx_{iB}=(\bx_{i(p-q+1)}\dots,\bx_{ip})'\in\mathbb{R}^{q}$, where $p-q$ is smaller than the sample size $n$, and let $\bX_A=(\bx_{1A},\dots,\bx_{nA})'\in\mathbb{R}^{n\times (p-q)}$ and $\bX_B=(\bx_{1B},\dots,\bx_{nB})'\in\mathbb{R}^{n\times q}$ be
the matrices associated with the $\bx_{iA}$'s and $\bx_{iB}$'s, respectively. Finally, let $\bX=(\bX_A,\bX_B)'\in\mathbb{R}^{n\times p}$ be the matrix including all predictive variables. Then model \eqref{full:model} can be re-expressed as follows:
\begin{equation}\label{ABl:model}
\bY=\bX_{A} \bm\beta_A+\bX_B \bm\beta_B +\bm \varepsilon,
\end{equation}
where $\bm\beta=(\bm\beta'_A,\bm\beta'_B)'\in\mathbb{R}^{p}$, $\bm\beta'_A\in\mathbb{R}^{p-q}$ and $\bm\beta'_B\in\mathbb{R}^{q}$ are unknown regression coefficient vectors.

We are essentially interested in the estimation of $\bm\beta_A$ when it is plausible that $\bm\beta_B$ is a set of nuisance co-variates. This situation may arise when there is over-modelling and
one wishes to cut down the irrelevant part from the model \eqref{ABl:model}. 

%The above situation may be mathematically written in terms of a restriction on $\bm\beta$ as $\bH\bbeta=\bm0$, where $\bH$ is a $q\times p$ known matrix.

In real-life applications, $\bX_{A}$ often contains a small set of relevant predictors via prior knowledge
or preliminary analysis while $\bX_{B}$ collects a large number of predictors, whose
statistical significance is still not clear and thus needs to be investigated. So, one need to test the following statistical hypotheses:
\begin{equation}
\label{null_test}
H_0:\bm\beta_B= \bm0 \text{ versus } H_1:\bm\beta_B \neq \bm0.
\end{equation}

In order to test the null hypothesis in \eqref{null_test}, we use the following the test statistic
\begin{equation}
\label{HD_test_stat}
\mathscr{T}_n = T_2/\left \{2\sigma^4\text{tr}(\Sigma^2_{B|A})^{1/2}  \right \},
\end{equation}
where $T_2=T_1-\hsigma_{\rm LSE}^2n^{-1}q\text{tr}(\bM\bQ)$, $\hsigma_{\rm LSE}^2=q^{-1}(\bY-\bX_A\hbbeta_A^{\rm LSE})'(\bY-\bX_A\hbbeta_A^{\rm LSE})$,  $\bQ=(\bI_n-\bH_A)$, $\bH_A = \bX_A(\bX'_A\bX_A)^{-1}\bX'_A$, $T_1=n^{-1}\sum_{j\in S}\tilde{\bY'}\tilde{\bX_j}\tilde{\bX'_j}\tilde{\bY}$, (where $S=\left\{(p-q+1),\dots,p\right\}$, $\tilde{\bY} = (\bI_p-\bH_A)\bY$ and $\tilde{\bX_j} = (\bI_p-\bH_A)\bX_j$) and $\bM=q^{-1}\sum_{j \in S}\tilde{\bX_j}\tilde{\bX'_j}$. 

\begin{theorem}
\label{HD_test_teo}
Under the conditions (C1) and (C2) which are given in \citet{Lan-et-al2014}, it is obtained that $\mathscr{T}_n\overset{\mathcal D}{\to} Z$, $Z\sim N(0,1)$.
\end{theorem}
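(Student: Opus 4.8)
The plan is to recognize $\mathscr{T}_n$ as a \emph{standardized, centred quadratic form} in the error vector and to invoke a central limit theorem for such forms, with conditions (C1)--(C2) of \citet{Lan-et-al2014} supplying the negligibility requirements that force a Gaussian limit in the growing-dimension regime. First I would reduce $T_1$ to a quadratic form. Under $H_0:\bm\beta_B=\bm0$ the model \eqref{ABl:model} reads $\bY=\bX_A\bm\beta_A+\bm\varepsilon$, and since $\bQ\bX_A=\bm0$ we get $\tilde{\bY}=\bQ\bY=\bQ\bm\varepsilon$; hence $\tilde{\bX}_j'\tilde{\bY}=\bX_j'\bQ\bm\varepsilon$ and
\[
T_1=\frac1n\sum_{j\in S}\big(\bX_j'\bQ\bm\varepsilon\big)^2=\frac1n\,\bm\varepsilon'\bA\bm\varepsilon,\qquad \bA:=\bQ\bX_B\bX_B'\bQ.
\]
Using $q\bM=\bQ\bX_B\bX_B'\bQ$, the cyclic property and $\bQ^2=\bQ$ one checks $q\,\mathrm{tr}(\bM\bQ)=\mathrm{tr}(\bX_B'\bQ\bX_B)=\mathrm{tr}(\bA)$, so that $T_2=n^{-1}\big(\bm\varepsilon'\bA\bm\varepsilon-\hsigma_{\rm LSE}^2\,\mathrm{tr}(\bA)\big)$. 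Because $\rE[\bm\varepsilon'\bA\bm\varepsilon\mid\bX]=\sigma^2\,\mathrm{tr}(\bA)$, this is precisely the recentring that renders $T_2$ mean-zero once $\hsigma_{\rm LSE}^2$ is replaced by $\sigma^2$.

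\textbf{Conditional CLT.} I would argue conditionally on the full design $\bX$, so that $\bA$ is fixed and $\bm\varepsilon'\bA\bm\varepsilon-\sigma^2\mathrm{tr}(\bA)$ is a centred quadratic form in the independent errors. Splitting it into its diagonal part $\sum_i A_{ii}(\varepsilon_i^2-\sigma^2)$ and off-diagonal part $2\sum_{i<k}A_{ik}\varepsilon_i\varepsilon_k$, the latter is a martingale-difference sum (increments $D_k=\varepsilon_k\sum_{i<k}A_{ik}\varepsilon_i$), and a martingale CLT gives
\[
\frac{\bm\varepsilon'\bA\bm\varepsilon-\sigma^2\,\mathrm{tr}(\bA)}{\sqrt{2\sigma^4\,\mathrm{tr}(\bA^2)}}\ \overset{\mathcal D}{\to}\ N(0,1),
\]
provided no eigenvalue of $\bA$ dominates and $\mathrm{tr}(\bA^4)/\{\mathrm{tr}(\bA^2)\}^2\to0$, with the diagonal (fourth-moment) contribution asymptotically negligible; these are the content of (C1)--(C2). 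Since the conditional limit is the same standard normal for almost every realization of $\bX$, dominated convergence of the conditional characteristic functions yields unconditional convergence.

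\textbf{Matching the scaling and Slutsky.} It remains to replace the random normalizer $\mathrm{tr}(\bA^2)$ by the deterministic $n^2\,\mathrm{tr}(\Sigma^2_{B|A})$. Since $\mathrm{tr}(\bA^2)=\mathrm{tr}\big((\bX_B'\bQ\bX_B)^2\big)$ and $n^{-1}\bX_B'\bQ\bX_B$ is the sample conditional covariance of $\bx_{iB}$ given $\bx_{iA}$, a law of large numbers for the random design yields $n^{-2}\mathrm{tr}(\bA^2)\overset{p}{\longrightarrow}\mathrm{tr}(\Sigma^2_{B|A})$. Combined with the ratio-consistency $\hsigma_{\rm LSE}^2\overset{p}{\longrightarrow}\sigma^2$, this controls both the recentring error from replacing $\hsigma_{\rm LSE}^2$ by $\sigma^2$ and the $\sigma^4$ in the denominator, and Slutsky's theorem assembles the pieces into $\mathscr{T}_n\overset{\mathcal D}{\to}N(0,1)$.

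\textbf{Main obstacle.} The delicate part is the quadratic-form CLT in the regime where the nuisance dimension $q$, and hence the rank and spectrum of $\bA$, grows with $n$: one must verify that the replacement $\hsigma_{\rm LSE}^2\to\sigma^2$ perturbs the numerator only at order $o_p$ of the standard deviation $\{2\sigma^4\mathrm{tr}(\bA^2)\}^{1/2}$, and that the fourth-cumulant term of the (possibly non-Gaussian) errors does not enter the limiting variance. Both rest on the spectral control of $\bA$ encoded in conditions (C1)--(C2).
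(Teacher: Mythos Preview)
The paper does not actually prove this theorem: its entire proof reads ``The proof and further information can be found in \citet{Lan-et-al2014}.'' Your proposal therefore goes well beyond what the paper supplies, giving a genuine sketch of the argument rather than a bare citation.

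Your outline is sound and is essentially the route taken in \citet{Lan-et-al2014}: reduce $T_1$ under $H_0$ to the quadratic form $n^{-1}\bm\varepsilon'\bA\bm\varepsilon$ with $\bA=\bQ\bX_B\bX_B'\bQ$, identify $T_2$ as its centred version, apply a martingale CLT for quadratic forms (with (C1)--(C2) furnishing the Lindeberg/spectral negligibility conditions), and finish with Slutsky arguments to pass from $\mathrm{tr}(\bA^2)$ to $n^2\,\mathrm{tr}(\Sigma_{B|A}^2)$ and from $\hsigma^2_{\rm LSE}$ to $\sigma^2$. The algebraic reductions you record (in particular $q\,\mathrm{tr}(\bM\bQ)=\mathrm{tr}(\bA)$) are correct. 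The only point to be careful about when fleshing this out is the one you flag yourself: controlling the perturbation from $\hsigma^2_{\rm LSE}-\sigma^2$ at the right rate relative to $\{\mathrm{tr}(\bA^2)\}^{1/2}$, and checking that the diagonal/fourth-cumulant contribution is indeed negligible under the stated moment conditions; both are handled explicitly in \citet{Lan-et-al2014}.
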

\begin{proof}
The proof and further information can be found in \citet{Lan-et-al2014}.
\end{proof}

Thanks to this theorem, we can test a subset of
regression coefficients in HD case. Accordingly, Theorem \ref{HD_test_teo} indicates that, for a given significance level $\alpha$, we reject the null hypothesis in \eqref{null_test} if $\mathscr{T}_n>z_{1-\alpha}$, where $z_{\alpha}$ stands for the $\alpha$th quantile of a standard normal distribution. 
%Furthermore, we convert the distribution of $\mathscr{W}_{HD}$ to $\chi^2$ distribution with degree of freedom 1, basically by taking square of it. Hence, we reject the null hypothesis if $\mathscr{W}_{HD}>\chi_{1,\alpha}^2$.

\subsection{Proposed estimators for HD case}
In this case, the proposed estimators remain the same, except the test statistic and the pre-specified matrix $\bH$. For our purpose, we use the test statistic in Theorem \ref{HD_test_teo} and take ${\bH}= [\bm0_{q \times (p-q)},\bI_{q \times q}]$. 

As an instance the high-dimensional PTE is defined as 
\begin{equation}
\label{beta_HDPT}
{\hbbeta}_n^{\textrm{HD-PT}}=\hbbeta_n^{\textrm{GRR}}-\left (\hbbeta_n^{\textrm{GRR}}-\hbbeta_n^{\textrm{RGRR}}  \right ) \textrm{I}\left(\mathscr{T}_{n}\leq  z_{1-\alpha}\right).
\end{equation}%
We also have
\begin{equation*}
\bm{\widehat{\beta}}_n^{\textrm{HD-S}}=\bm{\widehat{\beta}}_n^{\textrm{GRR}}-d^\ast\left( \bm{\widehat{\beta}}_n^{\textrm{GRR}}-\bm{\widehat{\beta}}_n^{\textrm{RGRR}}\right)\mathscr{T}_{n}^{-1},
\end{equation*}
where the shrinkage parameter $d^\ast$ is obtained by cross validation which minimizes the mean square error of $\bm{\widehat{\beta}}_n^{\textrm{HD-S}}$.

\subsection{Properties: High-dimension}
In this section, we provide the limiting distribution of the proposed shrinkage estimators. Before, we give upper bounds of GRR and RGRR estimators. To this end, let 
\begin{eqnarray}
F(\bbeta)&=&(\bY-\bX\bbeta)'(\bY-\bX\bbeta)+\bbeta'\boldsymbol{K}\bbeta\cr
%F_{\rm R}(\bbeta)&=&F(\bbeta)+\boldsymbol{\lambda}'\bH\bbeta
\end{eqnarray}
Apparently, $\hbbeta_n^{GRR}=\argmin_{\bbeta\in\mathbb{R}^p} F(\bbeta)$ 
%and $\hbbeta_n^{RGRR}=\argmin_{\bbeta\in\mathbb{R}^p} F_{\rm R}(\bbeta)$. 
Now, consider that
\begin{eqnarray}
(\hbbeta_n^{GRR})'\boldsymbol{K}\hbbeta_n^{GRR}&\leq& (\bY-\bX\hbbeta_n^{GRR})'(\bY-\bX\hbbeta_n^{GRR})
+(\hbbeta_n^{GRR})'\boldsymbol{K}\hbbeta_n^{GRR}\cr
&=&F(\hbbeta_n^{GRR})\leq F(\boldsymbol{0})=\bY'\bY
\end{eqnarray}
%Similarly
%\begin{eqnarray}
%(\hbbeta_n^{\rm RGRR})'\boldsymbol{K}\hbbeta_n^{RGRR}
%+\boldsymbol{\lambda}'\bH\hbbeta_n^{RGRR}
%\leq\bY'\bY
%\end{eqnarray}
Let $k_{(1)}=\min_{1\leq i\leq p}(k_i)$ be known. Then, we obtain
\begin{eqnarray}\label{eq48}
(\hbbeta_n^{\rm GRR})'\hbbeta_n^{\rm GRR}&\leq& k_{(1)}^{-1}\bY'\bY
\quad \Rightarrow \quad \mathbb{E}((\hbbeta_n^{GRR})'\hbbeta_n^{GRR})\leq k_{(1)}^{-1}\mathbb{E}(\bY'\bY)
%(\hbbeta_n^{\rm RGRR})'\hbbeta_n^{RGRR}+\boldsymbol{\lambda}'\bH\hbbeta_n^{RGRR}&\leq& k_{(1)}^{-1}\bY'\bY\cr
\end{eqnarray}
%${\rm tr}(\bM_{\rm \bK}^2)=p+3q$
Using the Courant Theorem, we have that 
\begin{eqnarray}\label{eq49}
\frac{(\hbbeta_n^{\rm RGRR})'(\hbbeta_n^{\rm RGRR})'}{(\hbbeta_n^{\rm GRR})'\hbbeta_n^{\rm GRR}}\leq \lambda_{\max}(\bM_{\rm \bK}^2)=\lambda_{\max}^2(\bM_{\rm \bK}).
\end{eqnarray}
Therefore, by \eqref{eq48} together with \eqref{eq49}, $\mathbb{E}((\hbbeta_n^{\rm RGRR})'\hbbeta_n^{RGRR})\leq k_{(1)}^{-1}\lambda_{\max}^2(\bM_{\rm \bK})\mathbb{E}(\bY'\bY)$.

Now, define the high-dimensional shrinkage (HDS) estimator as
\begin{eqnarray}
\hbbeta_n^{\rm HD-Shrinkage}=\hbbeta_n^{\rm GRR}-\left(\hbbeta_n^{\rm GRR}-(\hbbeta_n^{\rm RGRR})'\right)g(\mathscr{T}_n)
\end{eqnarray}
Then, $\hbbeta_n^{\rm HD-Shrinkage}=\bA_n\hbbeta_n^{\rm GRR}$, where $\bA_n=\bI_p-(\bI_p-\bM_{\rm \bK})g(\mathscr{T}_n)$. Using Theorem \ref{HD_test_teo}, $\bA_n\overset{\mathcal D}{\to}\bI_p-(\bI_p-\bM_{\rm \bK})g(Z)$ for continuous functions $g(\cdot)$. On the other hand, $\hbbeta_n^{\rm GRR}\overset{\mathcal P}{\to}\bS_{\rm \bK}^{-1}\bS\bbeta$. Thereofore, by Slutsky's theorem, $\hbbeta_n^{\rm HD-Shrinkage}\overset{\mathcal D}{\to}\bS_{\rm \bK}^{-1}\bS\bbeta-(\bI_p-\bM_{\rm \bK})\bS_{\rm \bK}^{-1}\bS\bbeta g(Z)$. However, not all functions $g(\cdot)$ are continuous. 

Using the result of \citet{saleh2006}, we have the following theorem for the high-dimensional case, without proof. 
\begin{theorem} Under the definitions at above, we have
\begin{eqnarray}
\hbbeta_n^{\rm HD-PT}&\overset{\mathcal D}{\to}&[\bI_p-(\bI_p-\bM_{\rm \bK})I(Z\leq z_\alpha)]\bS_{\rm \bK}^{-1}\bS\bbeta\cr
\hbbeta_n^{\rm HD-SPT}&\overset{\mathcal D}{\to}&[\bI_p-\omega(\bI_p-\bM_{\rm \bK})I(Z\leq z_\alpha)]\bS_{\rm \bK}^{-1}\bS\bbeta\cr
\hbbeta_n^{\rm HD-S}&\overset{\mathcal D}{\to}&[\bI_p-d(\bI_p-\bM_{\rm \bK})Z^{-1}]\bS_{\rm \bK}^{-1}\bS\bbeta\cr
\hbbeta_n^{\rm HD-PS}&\overset{\mathcal D}{\to}&[(\bI_p-\bM_{\rm \bK})+(\bI_p-\bM_{\rm \bK})(1-dZ^{-1})I(Z>d)]\bS_{\rm \bK}^{-1}\bS\bbeta\cr
\hbbeta_n^{\rm HD-IPT}&\overset{\mathcal D}{\to}&[\bM_{\rm \bK} dZ^{-1}+(1-dZ^{-1})I(Z>d)[1-I(Z\leq z_\alpha)(\bI_p-\bM_{\rm \bK})]
]\bS_{\rm \bK}^{-1}\bS\bbeta\cr
\end{eqnarray}
\end{theorem}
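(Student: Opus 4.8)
The plan is to handle all five estimators at once through the representation derived immediately above the statement, $\hbbeta_n^{\rm HD-Shrinkage}=\bA_n\hbbeta_n^{\rm GRR}$ with $\bA_n=\bI_p-(\bI_p-\bM_{\bK})g(\mathscr{T}_n)$, and to read the relevant $g$ off the definitions of Section~\ref{sec:PrE} after replacing $\mathscr{W}_n$ by $\mathscr{T}_n$ and the cutoff $\mathscr{F}_{q,m}(\alpha)$ by $z_{1-\alpha}$. This gives $g(t)=I(t\le z_{1-\alpha})$ for HD-PT, $g(t)=\omega I(t\le z_{1-\alpha})$ for HD-SPT, $g(t)=dt^{-1}$ for HD-S, $g(t)=dt^{-1}+(1-dt^{-1})I(t\le d)$ for HD-PS, and $g(t)=dt^{-1}+(1-dt^{-1})I(t\le z_{1-\alpha})$ for HD-IPT. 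The theorem then reduces to proving $\bA_n\hbbeta_n^{\rm GRR}\overset{\mathcal D}{\to}[\bI_p-(\bI_p-\bM_{\bK})g(Z)]\bS_{\bK}^{-1}\bS\bbeta$ for each such $g$ and rewriting the right-hand side.

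The engine is the continuous mapping theorem applied to the weak limit $\mathscr{T}_n\overset{\mathcal D}{\to}Z$ supplied by Theorem~\ref{HD_test_teo}. The point flagged in the text, that not every $g$ is continuous, is resolved by noting that each of these $g$ is continuous off a finite set contained in $\{0,z_{1-\alpha},d\}$: for HD-S the only break is at $0$; for HD-PS the factor $1-dt^{-1}$ vanishes at $t=d$ and cancels the pole at $t=0$ inside the indicator region, so $g$ is in fact continuous and bounded everywhere; and for HD-PT, HD-SPT, HD-IPT the sole genuine jump is at the threshold $z_{1-\alpha}$. Since the limit $Z$ is standard normal it has no atoms, so this finite set is $Z$-null, and the a.e.-continuity version of the continuous mapping theorem yields $g(\mathscr{T}_n)\overset{\mathcal D}{\to}g(Z)$, whence $\bA_n\overset{\mathcal D}{\to}\bI_p-(\bI_p-\bM_{\bK})g(Z)$. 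This is the only place where the Gaussian nature of the high-dimensional limit is essential, and it is the main obstacle of the argument.

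Next I would pass to the product. With $c=\bS_{\bK}^{-1}\bS\bbeta$ and $\hbbeta_n^{\rm GRR}\overset{\mathcal P}{\to}c$, write $\bA_n\hbbeta_n^{\rm GRR}=\bA_n c+\bA_n(\hbbeta_n^{\rm GRR}-c)$. The first term converges in distribution to $[\bI_p-(\bI_p-\bM_{\bK})g(Z)]c$ by the previous step and the continuous mapping theorem, and the second term is $o_{\rm P}(1)$ once $\bA_n$ is shown to be tight: the indicator parts are bounded by construction, while the $\mathscr{T}_n^{-1}$ parts are tight because $P(Z=0)=0$, and the bounds \eqref{eq48}--\eqref{eq49} on $\hbbeta_n^{\rm GRR}$ and $\hbbeta_n^{\rm RGRR}$ keep $\hbbeta_n^{\rm GRR}-c$ in control. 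Slutsky's theorem then delivers $\hbbeta_n^{\rm HD-Shrinkage}\overset{\mathcal D}{\to}[\bI_p-(\bI_p-\bM_{\bK})g(Z)]\bS_{\bK}^{-1}\bS\bbeta$.

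It remains to substitute each $g$ into $\bI_p-(\bI_p-\bM_{\bK})g(Z)$. The HD-PT, HD-SPT and HD-S expressions then appear immediately. For HD-PS and HD-IPT I would regroup as $\bI_p-(\bI_p-\bM_{\bK})g(Z)=\bM_{\bK}+(\bI_p-\bM_{\bK})(1-g(Z))$ and use $1-g(Z)=(1-dZ^{-1})I(Z>d)$, respectively the analogue with threshold $z_{1-\alpha}$; the leading $\bM_{\bK}\bS_{\bK}^{-1}\bS\bbeta$ is exactly the weak limit of $\hbbeta_n^{\rm RGRR}$, and this reproduces the stated positive-part and improved-pretest forms. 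Everything beyond the discontinuity handled above is the standard Slutsky/CMT bookkeeping, and amounts to the high-dimensional transcription of the distributional identities in \citet{saleh2006}.
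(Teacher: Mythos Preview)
Your proposal coincides with the argument the paper sketches in the paragraph immediately preceding the theorem: write $\hbbeta_n^{\rm HD\text{-}Shrinkage}=\bA_n\hbbeta_n^{\rm GRR}$ with $\bA_n=\bI_p-(\bI_p-\bM_{\bK})g(\mathscr{T}_n)$, invoke Theorem~\ref{HD_test_teo} to get $\bA_n\overset{\mathcal D}{\to}\bI_p-(\bI_p-\bM_{\bK})g(Z)$, use $\hbbeta_n^{\rm GRR}\overset{\mathcal P}{\to}\bS_{\bK}^{-1}\bS\bbeta$, and finish with Slutsky. The paper then declares the theorem ``without proof'' and refers the discontinuous-$g$ cases to \citet{saleh2006}, so there is no further proof in the paper to compare against.

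What you add over the paper's outline is precisely the missing step: the discontinuity sets of the five $g$'s are finite, and since $Z\sim N(0,1)$ is atomless they are $Z$-null, so the almost-sure version of the continuous mapping theorem applies. That is correct, as is your remark that the HD-PS choice $g(t)=dt^{-1}+(1-dt^{-1})I(t\le d)$ collapses to $1$ on $\{t\le d\}$ and $d/t$ on $\{t>d\}$ and is therefore continuous everywhere. One small point: your regrouping $\bI_p-(\bI_p-\bM_{\bK})g(Z)=\bM_{\bK}+(\bI_p-\bM_{\bK})(1-g(Z))$ yields a leading $\bM_{\bK}$ in the HD-PS limit, not the $(\bI_p-\bM_{\bK})$ printed in the statement; your derivation is the correct one and the discrepancy is a typo in the displayed formula, not a defect in your argument.
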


%%%%%%%%%%%%%%%%%%%%%%%%%%%%%%%%%%%%%%%%%%%%%%%%%%
%\clearpage
\section{Penalized Estimation}
\label{sec:PE}
In this paper, we only consider the penalized least square regression methods to obtain estimators for the model parameters in a multiple regression models when $p<n$ and $p>n$. The key idea in penalized regression methods is minimizing an objection function $L_{\lambda}$ in the form of
\begin{equation}
\label{objective}
L_{\lambda}(\bbeta)=(\by -\bX\bbeta)'(\by -\bX\bbeta) + \lambda
P(\bm \beta)
\end{equation}
to obtain the estimates of the parameter. The first term in the objective function is the sum of the squared error loss, the second term $\rho$ is a penalty function, and $\lambda$ is a tuning parameter which controls the trade-off between two components of $L_{\lambda}$.

The penalty function is usually chosen as a norm on $\mathbb{R}^p$,
\begin{equation}\label{eq:general:penalizedLS}
P(\bm \beta)=\sum_{j=1}^p|\beta_j|^{\gamma}, \ {\gamma}> 0.
\end{equation}
This class of estimator are called the bridge estimators, proposed by \citet{Frank1993}.

The ridge regression (\citet{Ho-Ke1970}, \citet{Frank1993}) minimizes the residual sum of squares subject to an $\mathit{l}_2$-penalty, that is,
\begin{equation}\label{eq:ridge:solution}
 \hbbeta_n^{\textrm{Ridge}} = \mbox{arg}\min_{\bm \beta}\left\{\sum_{i=1}^n (y_i- \sum_{j=1}^{p}x_{ij}\beta_j)^2 + \lambda
\sum_{j=1}^{p}\beta_j^2 \right\},
\end{equation}
where $\lambda$ is a tuning parameter.

\subsection{LASSO} The LASSO was proposed by \citet{Tibshirani1996}, which performs variable selection and parameter estimation simultaneously thanks to the $\mathit{l}_1$-penalty.
The LASSO estimates are defined by
\begin{equation}
\label{eq:LASSO:solution}
  \hbbeta_n^{\textrm{LASSO}} = \argmin_{\bbeta}\left\{\sum_{i=1}^n (y_i
- \sum_{j=1}^{p}x_{ij}\beta_j )^2 + \lambda
\sum_{j=1}^{p}|\beta_j| \right\}.
\end{equation}

\subsection{ALASSO}

\citet{Zou2006} introduced the ALASSO by modifying the LASSO penalty by using adaptive weights on $\mathit{l}_1$-penalty with the regression coefficients. 

The ALASSO $\widehat\beta^{\textrm{ALASSO}}$ are obtained by
\begin{equation}\label{eq:adALASSO:ch4}
 \hbbeta_n^{\textrm{ALASSO}} = \argmin_{\bbeta} \left\{
\sum_{i=1}^{n} (y_i -\sum_{j=1}^{p}x_{ij}\beta_j)^2 +
\lambda \sum_{j=1}^{p} \widehat{w}_j |\beta_j| \right\},
\end{equation}
where the weight function is
\[
 \widehat{w}_j = \frac{1}{|\widehat{\beta}^*_j|^\gamma}; \quad \gamma>0,
\]
and $\widehat{\beta}_j^*$ is a root-n-consistent estimator of $\beta$.
The minimization procedure for ALASSO solution does not induce any computational difficulty and can be solved very efficiently, for the details see section 3.5 in \citet{Zou2006}.

\subsection{SCAD}
Although the LASSO method does both shrinkage and variable
selection due to the nature of the $\mathit{l}_1$-penalty by setting many coefficients identically to zero, it does
not possess oracle properties, as discussed in
\citet{FanLi2001}. To overcome the inefficiency of
traditional variable selection procedures, they
proposed SCAD\index{SCAD} to select variables and estimate the
coefficients of variables automatically and simultaneously. Given the tuning parameters $a >2$ and $\lambda>0$, the SCAD penalty at $\beta$ is 
\begin{equation*}
\label{eq:scad:solution}
J_\lambda (\beta;a )=\left\{\begin{matrix}
\lambda \left | \beta \right |, &\left | \beta \right |\leq \lambda  \\ 
 -\left (\beta^2-2a \lambda \left | \lambda  \right |+\lambda ^2  \right )/\left [ 2(a -1) \right ],&\lambda < \left | \beta \right |\leq a\lambda  \\ 
 (a+1)\lambda ^2/2& \left | \beta \right | >a\lambda.
\end{matrix}\right.
%\label{SCAD_pen}
\end{equation*}%
Hence, the SCAD estimation is given by
\begin{equation*}
\hbbeta_n^{\rm SCAD}\underset{\bbeta} =\argmin_{\bbeta}\left\{\sum_{i=1}^n  \left
   (y_i-\sum_{j=1}^{p}x_{ij}\beta_j \right)^2 +
\sum_{j=1}^{p}J_\lambda (\beta _{j};a )\right\}.
\end{equation*}%

\subsection{MCP}

\citet{Zhang2007} introduced a new penalization method for variable %selection, {\it minimax concave penalty} (MCP), which is given by
selection, which is given by
 \[
  \hbbeta_n^{{\rm MCP}} = \argmin_{\bbeta}\left\{\sum_{i=1}^n  \left
   (y_i-\sum_{j=1}^{p}x_{ij}\beta_j \right)^2+ \sum_{j=1}^{p}
 \rho(|\beta_j|;\lambda)\right\},
\]
where $\rho(\cdot;\lambda)$ is the MCP penalty given by
\[\rho(t;\lambda)=\lambda \int_0^t (1-\frac{x}{\gamma \lambda})^+ \,dx\]
where $\gamma>0$ and $\lambda$ are regularization and penalty parameters respectively.

\subsection{Elastic-Net}
The Elastic-Net was proposed by \citet{ZouHastie2005} to overcome the limitations of the LASSO and Ridge methods. 

\begin{equation}\label{eq:ENET:solution}
  \hbbeta^{\textrm{ENET}} = \mbox{arg}\min_{\beta}\left\{\sum_{i=1}^n (y_i - \sum_{j=1}^{d}x_{ij}\beta_j )^2 + \lambda_1
\sum_{j=1}^{d}|\beta_j| +\lambda_2
\sum_{j=1}^{d}\beta_j^2 \right\},
\end{equation}
where $\lambda_2$ is the ridge penalty parameter, penalizing the sum of the squared regression coefficients and $\lambda_1$ is the LASSO penalty, penalizing the sum of the absolute values of the regression coefficients, respectively.

\subsection{MNET}

\citet{Huangetal2010} introduced the MNET estimation which use MCP penalty term instead of the $\mathit{l}_1$-penalty in \eqref{eq:ENET:solution}.

\begin{equation}\label{eq:MNET:solution}
  \hbbeta^{\textrm{MNET}} = \mbox{arg}\min_{\beta}\left\{\sum_{i=1}^n (y_i - \sum_{j=1}^{d}x_{ij}\beta_j )^2 + \sum_{j=1}^{p}
 \rho(|\beta_j|;\lambda_1) +\lambda_2
\sum_{j=1}^{d}\beta_j^2 \right\}.
\end{equation}

Similar to the elastic net of \citet{ZouHastie2005}, the MNET also tends to select or drop highly correlated predictors together. However, unlike the elastic net, the MNET is selection consistent and equal to the oracle ridge estimator with high probability under reasonable conditions

%%%%%%%%%%%%%%%%%%%%%%%%%%%%%%%%%%%%%%%%%%%%%%%%%%
\section{Simulation Study}
\label{sec:SS}
%We conduct Monte-Carlo simulation experiments to study the relative performances GRR the proposed estimators under various practical settings. In order to generate the explanatory variables, we use
%\begin{equation}x_{ij}=(1-\rho^2)^{1/2}\phi_{ij}+\rho\phi_{ip},\ i=1,\dots,n, j=1,\dots,p,  \label{predictors}
%\end{equation}%where $\phi_{ij}$'s are independent standard normal pseudo-random numbers. $\rho^2$ is the correlation between any two explanatory variables.

We conduct Monte-Carlo simulation experiments to study the performances of the proposed estimators under various practical settings. In order to generate the response variables, we consider
\begin{equation*}
\bY=\bX \bm\beta+ \sigma\bm \varepsilon,
\end{equation*}
where $\bm \varepsilon\sim \mathscr{N}(0,\boldsymbol{I}_p)$. 

\subsection{Risk Performances}

We consider the regression coefficients are set
\linebreak $\bbeta=\left( \bbeta_{1}',\boldsymbol{%
\beta }_{2}'\right)' =\left( \bm1'_{p-q},\bm{0}_{q}'\right)'$, where $\bm1_{p-q}$ and $\bm{0}_q$ mean the vectors of 1 and 0 with dimensions $p-q$ and $q$, respectively. In order to investigate the behaviour of the estimators, we define $\Delta^{\ast}=\left\Vert \boldsymbol{\beta -\beta }_{0}\right\Vert $, where $\bbeta_{0}=\left( \bm1'_{p-q},\boldsymbol{0}_{q}'%
\right)'$ and $\left\Vert \cdot \right\Vert $ is the Euclidean norm. We considered $\Delta^{\ast}$ values between $0$ and $4$. If $\Delta^{\ast}=0$, then it means that we
will have $\bbeta=\left( \bm1_{p-q}',\bm{0}_{q}'\right)'$ to generated the response while we will have $\bbeta=(\bm1'_{p-q},0.5, \bm{0}_{q-1}')'$ when $\Delta^{\ast}>0$, say $\Delta^{\ast}=0.5$. When we increase the number of $\Delta^{\ast}$, it indicates the degree of violation of null hypothesis. We also set $\bX \sim \mathscr{N}(0,\boldsymbol{\Sigma})$, where $\Sigma_{kj}=\rho^{|k-j|}$, $k=1,\dots,p$, $j=1,\dots,p$.

%We consider the regression coefficients are set same in LD case expect that $p>n$. In order to investigate the behaviours of estimators, we define same $\Delta^{\ast}$ in LD case. But, we consider  $\bbeta=(\bm1'_{q},\bm\kappa'_{10}, \bm{0}_{p-q-10}')'$ to see violation of null hypothesis. Here, $\bm\kappa$ varies from $0$ to $4$.

%We consider the regression coefficients are set $\bbeta=\left( \bbeta_{1}',\bbeta_{2}',\bbeta_{3}'\right)' =\left( \bm1'_{q},\bm\kappa'_{p_2},\bm{0}_{p_3}'\right)'$, where, $\bm1_{q}$, $\bm\kappa_{p_2}$ and $\bm{0}_{p_3}$ mean the vectors of 1, $\kappa$ and 0 with dimensions $q$, $p_2$ and $p_3$, respectively. If $\bm\kappa=\bm0$, then it indicates that the null hypothesis is true. On the other hand, the larger values $\bm\kappa$ indicate the degree of violation of null hypothesis.

The performance of one of the suggested estimator was evaluated by
using MSE criterion. Also, the relative mean square efficiency
(RMSE) of the $\hbbeta_n^{\ast}$ to the benchmark estimator (BE)
 $\hbbeta_n^{\rm BE}$ is indicated by
\begin{equation*}
\textnormal{RMSE}\left( \hbbeta_n^{\rm BE}:\hbbeta_n^{\ast }\right) =\frac{\textnormal{MSE}\left( \hbbeta_n^{\rm BE}\right) }{\textnormal{MSE}\left( \hbbeta_n^{\ast}\right) },
\end{equation*}
where $\hbbeta_n^{\rm BE}$ will be $\hbbeta_n^{\rm GRR}$ or $\hbbeta_n^{\rm HD-GRR}$ and  $\hbbeta_n^{\ast }$ is one of the listed estimators. If the RMSE of an estimator is larger than one, it indicates to superior to $\hbbeta_n^{\rm BE}$.

%\subsubsection{Results for LD Case}

%In Table \ref{deltas_LW}, we compare the performance of listed estimators with respect to the GRR. It is obviously seen that the GRR outperforms ORR for all $\rho$ values since the ridge penalties of GRR are selected different magnitudes for each covariates. The performance of LS is the best because it is linear function both GRR and RGRR, and we selected the value of $\omega$ optimally. As summary, all suggested estimators perform better than GRR, and their performances are increase when $\rho^2$ is larger, even though the performance of the ORR decreases.

%In Table \ref{deltas}, we show that the performance of RGRR decreases as $\Delta^{\ast}$ increases while the performance of the others approach to one. 

%\begin{figure}[!htbp]
%\centering
%\includegraphics[width=14cm,height=10cm]{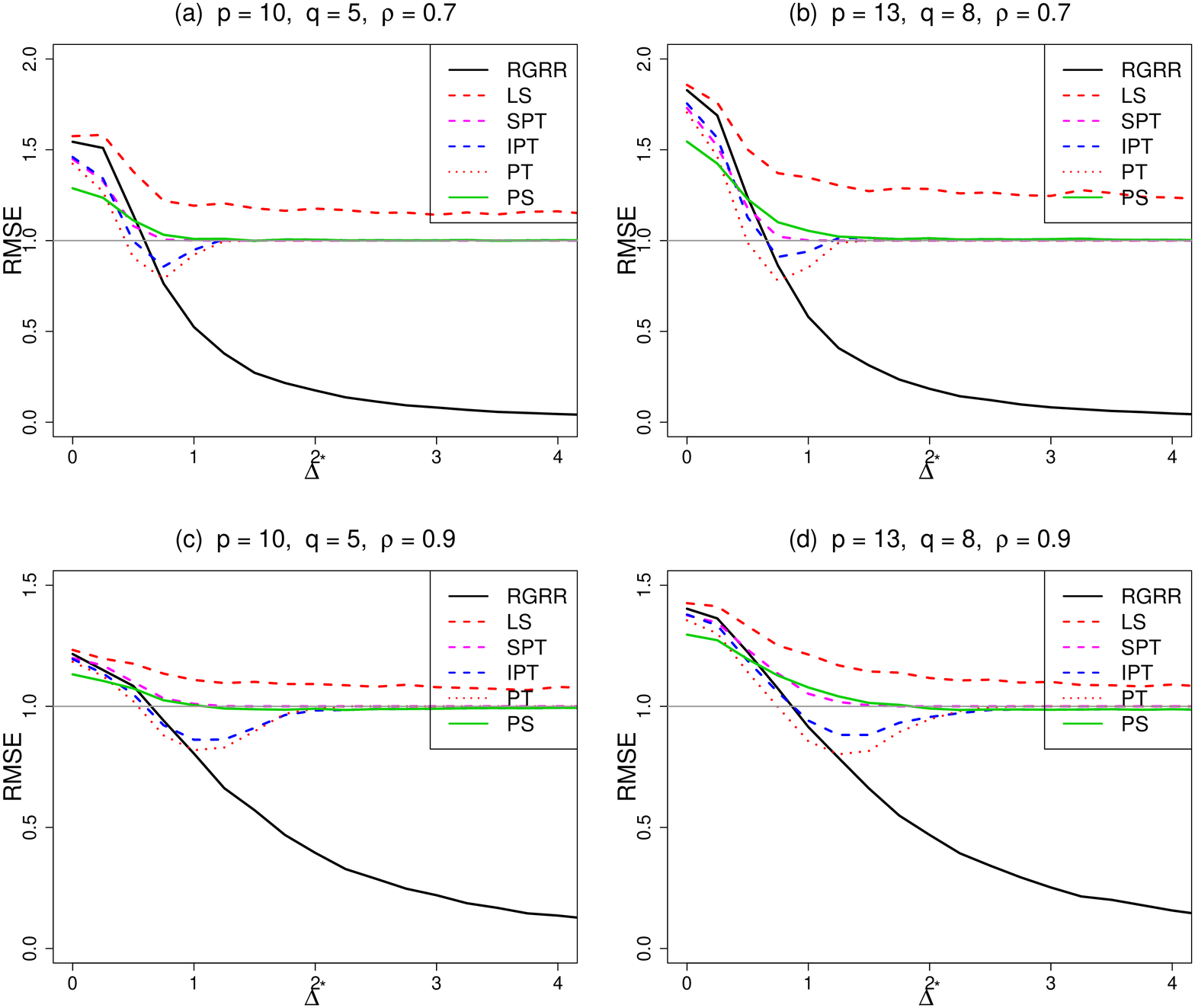}
%\caption{ RMSE of the estimators as a function of the
%non-centrality parameter $\Delta^{\ast}$ when $n=50$.
% \label{Fig:RMSE:n50}}
%\end{figure}

\begin{figure}
    \centering
    \begin{subfigure}[b]{1\textwidth}
        \centering
        \includegraphics[width=14cm,height=10cm]{RMSE_low.eps}
        \caption{LD case}
        \label{fig:LD}
    \end{subfigure}
    %\hfill
    \\
    \begin{subfigure}[b]{1\textwidth}
        \centering
        \includegraphics[width=14cm,height=10cm]{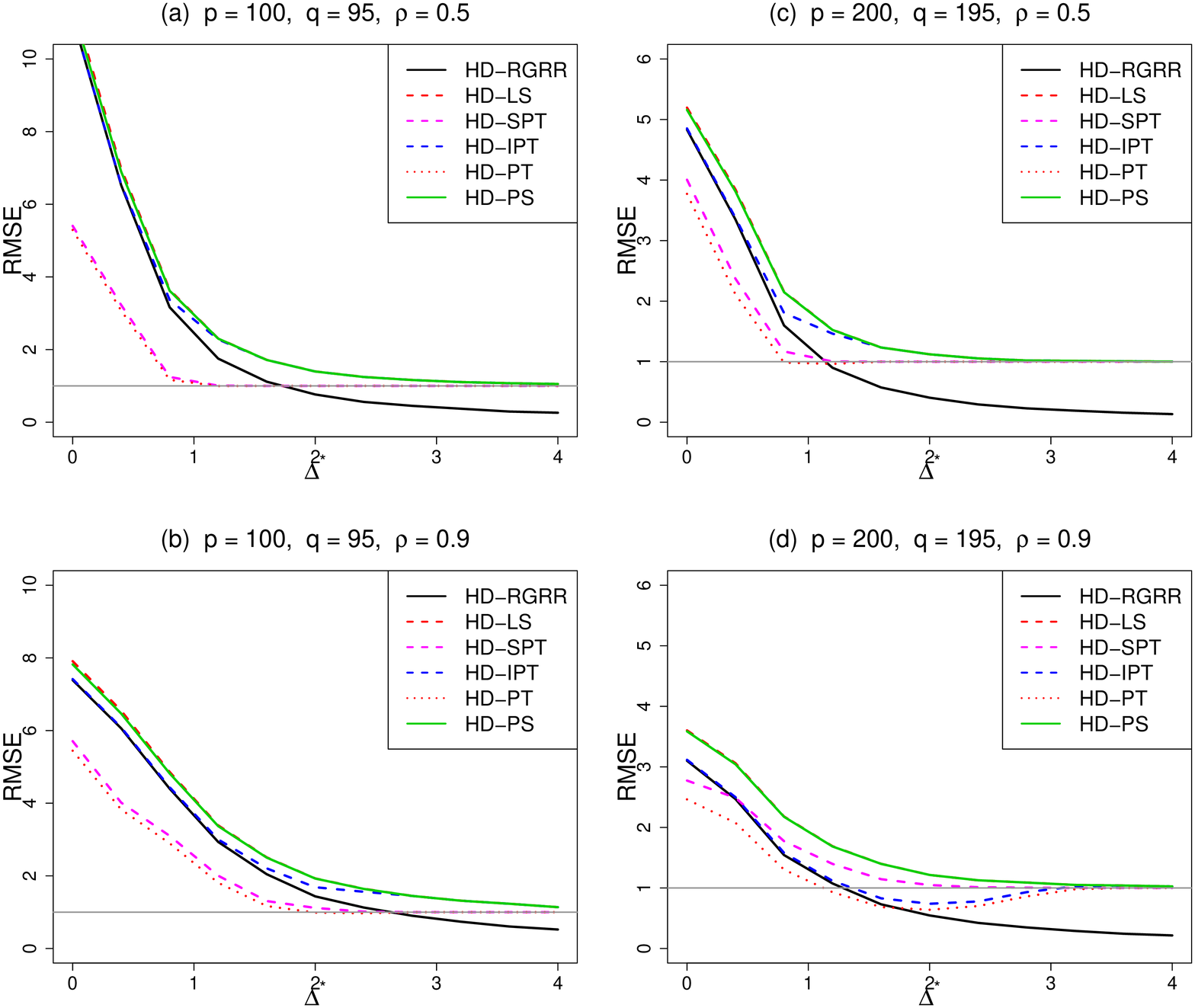}
        \caption{HD case}
        \label{fig:HD}
    \end{subfigure}
    \caption{RMSE of the estimators as a function of the
non-centrality parameter $\Delta^{\ast}$ when $n=50$}
    \label{Deltas}
\end{figure}

%\clearpage
%\subsubsection{Results for HD Case}
The results are shown in Figures \ref{fig:LD} and \ref{fig:HD} is for LD and HD cases, respectively. The findings can be summarized as follows:

\begin{enumerate} 
\setlength{\itemsep}{0pt} \setlength{\parskip}{0pt}

   \item [\bf{LD case:}] 
   In general, when $\Delta^{\ast}=0$, RGRR outperforms all listed estimators, except that LS because we select $\omega$ tuning parameter of LS with CV. So, the performance of LS may be superior to RGRR. In contrast, after the small interval near $\Delta^{\ast}$, the RMSE of the RGRR decreases and goes to zero while the RMSE of LS, SPT and PS decrease and goes to one as $\Delta^{\ast}$ is large. On the other hand, the performance of Preliminary-types estimations, that are SPT, IPT and PT, outperform the PS as $\Delta^{\ast}$ is zero. When we increase the value of $\Delta^{\ast}$, the RMSEs of IPT and PT decrease, and they remain below one for intermediate values of  $\Delta^{\ast}$, and finally increase and approach to one for larger values of $\Delta^{\ast}$.

\item [\bf{HD case:}] We get similar pattern with LD case with some exceptions. Cleary, the RMSE of HD-LS and HD-PS is the best when the null hypothesis is true since we select their tuning parameters with CV. 
  
\end{enumerate}

In the following section, we compare the performance of listed estimations with LSE and penalty estimation techniques.
\subsection{Some Comparative Studies}
\label{SCS}

%{\bf  $\\$ I will write some more items which belong to both LD and HD case}

In each example of the LD cases, the simulated data contains a training dataset, validation data and an independent test set. We fitted the models only using the training data and the tuning parameters were selected using the validation data. In simulations, 
both we standardized the predictors to have zero mean and unit standard deviation before fitting the model and we center the response variable in order to omit the intercept term.
Finally, we computed two measures of performance, the test error (mean squared error) $\rm MSE_y = \frac{1}{n_{test}} \textbf{r}_{sim}'\textbf{r}_{sim}$ where $\textbf{r}_{sim} = \bx_i\bbeta-\bx_i\hbbeta$ and the mean squared error of the estimation of $\bbeta$ such that $\rm MSE_{\beta} = |\hbbeta-\bbeta|^2$ (see \cite{tutz:ulbricht2009}). We use the notation $\cdot/\cdot/\cdot$ to describe the number of observations in the training, validation and test set respectively, e.g. $100/100/200$. In each example of the HD cases, we did not split data as training, validation and test set. We just scaled data as the LD case. For both cases, we consider the $\rho=0.5,0.9$.

\begin{itemize}
\item[LD 1-] Each data set consists of $20/20/200$ observations. $\bbeta$ is specified by \linebreak $\bbeta' = (3, 1.5, 0, 0, 2, 0, 0, 0)$ and $\sigma = 3$. Also, we consider $\bX \sim \mathscr{N}(0,\boldsymbol{\Sigma})$, where $\Sigma_{kj}=\rho^{|k-j|}$.

\item[LD 2-] Each data set consists of $100/100/400$ observations and $40$ predictors, where $\beta_j=0$ when $j=1,\dots,10, 21,\dots,30$ and $\beta_j=2$ when $j=11,\dots,20, 31,\dots,40$. Also, we set $\sigma=1$, $\Sigma_{kj}=\rho^{|k-j|}$. 
%for $i\neq j$ and $\Sigma_{ij}= 1$ for $i = j$.

\item[LD 3-]  We simulated 250 data sets consisting of 50/50/200 observations and 30 predictors.
We chose
\begin{equation*}
\bbeta' =\left( \underbrace { 2,\dots,2}_{5},\underbrace {0,\dots,0}_{25} \right). 
\end{equation*}
Also, we consider $\sigma = 9$ and $\bX \sim \mathscr{N}(0,\boldsymbol{\Sigma})$, where $\Sigma_{kj}=\rho^{|k-j|}$.

\item[HD 1-] We simulated 50 data sets consisting of 50 observations and 100 predictors, where $\bbeta' = (\bm1_5, \bm0_{p-5})$. Also, we set $\sigma=3$, $\Sigma_{kj}=\rho^{|k-j|}$.

\item[HD 2-] We simulated 50 data sets consisting of 50 observations and 150 predictors, where $\bbeta' = (\bm3_{10}, \bm0_{p-10})$. Also, we set $\sigma=2$, $\Sigma_{kj}=\rho^{|k-j|}$.

\item[HD 3-] We simulated 50 data sets consisting of 50 observations and 120 predictors $\left\{\bX_1,\dots,\bX_p\right\}$. If the predictors $\bX_t,\dots,\bX_{t+g}$ are linked together on the network, they would be categorized into a group, denoted as $\underbrace{\bX_t,\dots,\bX_{t+g}}_{\rm group}$ $(0<t<t+g<p)$. In this example, the design matrix is \linebreak $\bX=\left\{\underbrace{\bX_1\dots,\bX_{10}}_{\rm group\text{ }1},\dots,\underbrace{\bX_{31}\dots,\bX_{40}}_{\rm group\text{ }4},\bX_{41},\dots,\bX_{120} \right\}$ where $\bX \sim \mathscr{N}(0,\boldsymbol{\Sigma})$. The diagonal elements of $\boldsymbol{\Sigma}$ are 1, and the non-diagonal elements within the group $m(m=1,\dots,4)$ are $\rho$, and the non-diagonal
elements between any two groups are $0.1$. We also set $\bbeta' = (\bm5_{10},\bm{-5}_{10},\bm3_{10},\bm{-3}_{10},\bm0_{p-40})$ and $\sigma=3$.

\end{itemize}

%We report the results in Tables \ref{table:LD:ex} and \ref{table:HD:ex}.

\begin{table}[!htbp]
\centering
%\small
  %\caption{Performances of methods for the simulated examples based on 250 replications for LD cases}
  \caption{Performances of methods for the simulated examples for LD cases}
  \label{table:LD:ex}
\begin{adjustbox}{width=.75\textwidth}
\begin{tabular}{lrrrrrrrrr}
\\[-1.8ex]\hline
\hline \\[-1.8ex]
&\multicolumn{2}{c}{LD 1} 
&\multicolumn{2}{c}{LD 2}
&\multicolumn{2}{c}{LD 3} \\

\cmidrule(lr){2-3} 
\cmidrule(lr){4-5} 
\cmidrule(lr){6-7} 

$\rho=0.5$&$\rm MSE_y$ &$\rm MSE_{\beta}$ 
&$\rm MSE_y$ &$\rm MSE_{\beta}$ 
&$\rm MSE_y$ &$\rm MSE_{\beta}$ 
\\
\\[-1.8ex]\hline
\hline \\[-1.8ex]
  GRR & 4.751 & 7.499 & 1.075 & 1.166 & 47.343 & 69.961 \\ 
  RGRR & 2.340 & 3.093 & 0.876 & 0.866 & 5.724 & 7.424 \\ 
  LS & 2.304 & 3.014 & 0.875 & 0.861 & 5.721 & 7.410 \\ 
  PT & 3.700 & 5.402 & 0.922 & 0.937 & 17.293 & 25.112 \\ 
  SPT & 3.683 & 5.359 & 0.921 & 0.932 & 17.294 & 25.102 \\ 
  PS & 3.717 & 5.580 & 0.899 & 0.900 & 10.555 & 14.893 \\ 
  IPT & 3.346 & 4.794 & 0.890 & 0.887 & 9.159 & 12.721 \\ 
  \cmidrule(lr){2-7}
  LSE & 6.432 & 10.779 & 1.282 & 1.560 & 121.944 & 199.726 \\ 
  Ridge & 3.820 & 4.555 & 1.580 & 1.396 & 8.009 & 5.305 \\ 
  LASSO & 3.652 & 4.511 & 1.118 & 1.080 & 8.786 & 7.145 \\ 
  ALASSO & 4.391 & 5.157 & 0.990 & 0.955 & 10.120 & 9.462 \\ 
  SCAD & 4.591 & 5.470 & 0.877 & 0.888 & 8.797 & 7.198 \\ 
  MCP & 4.964 & 5.924 & 0.876 & 0.885 & 9.518 & 8.386 \\ 
  ENET & 3.291 & 3.992 & 1.121 & 1.076 & 7.716 & 5.923 \\ 
  MNET & 2.802 & 3.679 & 0.881 & 0.864 & 7.973 & 6.725 \\

  \\[-1.8ex]\hline
\hline \\[-1.8ex]
&\multicolumn{2}{c}{LD 1} 
&\multicolumn{2}{c}{LD 2}
&\multicolumn{2}{c}{LD 3} \\

\cmidrule(lr){2-3} 
\cmidrule(lr){4-5} 
\cmidrule(lr){6-7} 

$\rho=0.9$&$\rm MSE_y$ &$\rm MSE_{\beta}$ 
&$\rm MSE_y$ &$\rm MSE_{\beta}$ 
&$\rm MSE_y$ &$\rm MSE_{\beta}$ 
\\
\\[-1.8ex]\hline
\hline \\[-1.8ex]

  GRR & 4.709 & 27.042 & 1.849 & 4.494 & 45.698 & 340.141 \\ 
  RGRR & 2.689 & 7.233 & 1.642 & 2.684 & 5.496 & 25.925 \\ 
  LS & 2.635 & 7.020 & 1.641 & 2.663 & 5.485 & 25.847 \\ 
  PT & 3.271 & 13.799 & 1.707 & 3.221 & 16.964 & 119.140 \\ 
  SPT & 3.222 & 13.606 & 1.705 & 3.203 & 16.956 & 119.080 \\ 
  PS & 3.372 & 15.046 & 1.669 & 2.922 & 10.045 & 63.851 \\ 
  IPT & 3.071 & 11.766 & 1.662 & 2.849 & 8.764 & 53.361 \\ 
  \cmidrule(lr){2-7}
  LSE & 7.683 & 60.845 & 2.027 & 6.537 & 120.168 & 1110.142 \\ 
  Ridge & 2.861 & 6.976 & 2.062 & 2.667 & 7.635 & 7.539 \\ 
  LASSO & 2.857 & 10.122 & 1.953 & 3.559 & 7.409 & 13.160 \\ 
  ALASSO & 3.462 & 13.013 & 1.895 & 5.425 & 8.210 & 19.630 \\ 
  SCAD & 4.007 & 15.929 & 1.724 & 3.437 & 7.564 & 15.995 \\ 
  MCP & 4.151 & 18.373 & 1.741 & 3.626 & 8.038 & 18.534 \\ 
  ENET & 2.586 & 7.594 & 1.899 & 2.331 & 6.730 & 9.848 \\ 
  MNET & 2.629 & 8.680 & 1.718 & 1.771 & 7.307 & 16.115 \\ 

\hline
\end{tabular}
\end{adjustbox}
\end{table}

In Table \ref{table:LD:ex}, we report the results of the examples by simulating $250$ data sets. The findings can be summarized as follows: For LD 1 case, the performance of LS is the best. The MNET has the best performance among penalized and LSE estimators in the sense of both measures when $\rho=0.5$. On the other hand, the ENET is the best performance for first measure and the Ridge is the best performance for second measure. 

\begin{table}[!htbp]
\centering
%\small
  \caption{Performances of methods for the simulated examples for HD cases}
  \label{table:HD:ex}
\begin{adjustbox}{width=.75\textwidth}
\begin{tabular}{lrrrrrrrrr}
\\[-1.8ex]\hline
\hline \\[-1.8ex]
&\multicolumn{2}{c}{HD 1} 
&\multicolumn{2}{c}{HD 2}
&\multicolumn{2}{c}{HD 3} \\

\cmidrule(lr){2-3} 
\cmidrule(lr){4-5} 
\cmidrule(lr){6-7} 

$\rho=0.5$&$\rm MSE_y$ &$\rm MSE_{\beta}$ 
&$\rm MSE_y$ &$\rm MSE_{\beta}$ 
&$\rm MSE_y$ &$\rm MSE_{\beta}$ 
\\
\\[-1.8ex]\hline
\hline \\[-1.8ex]
  GRR & 4.195 & 5.235 & 2.925 & 7.022 & 8.430 & 134.643 \\ 
  RGRR & 1.223 & 1.413 & 0.807 & 1.732 & 7.857 & 95.601 \\ 
  LS & 1.220 & 1.408 & 0.823 & 1.690 & 7.400 & 76.692 \\ 
  PT & 1.305 & 1.488 & 1.117 & 2.398 & 8.430 & 134.643 \\ 
  SPT & 1.302 & 1.484 & 1.130 & 2.363 & 8.430 & 134.643 \\ 
  PS & 1.220 & 1.409 & 0.823 & 1.691 & 7.401 & 76.704 \\ 
  IPT & 1.223 & 1.413 & 0.811 & 1.724 & 7.403 & 76.703 \\ 
  \cmidrule(lr){2-7}
  Ridge & 8.593 & 12.048 & 3.843 & 38.107 & 8.622 & 136.283 \\ 
  LASSO & 7.792 & 14.300 & 2.689 & 5.046 & 10.657 & 365.165 \\ 
  ALASSO & 6.915 & 14.355 & 1.179 & 2.960 & 40.906 & 769.563 \\ 
  SCAD & 8.274 & 31.417 & 1.308 & 5.256 & 83.896 & 1968.911 \\ 
  MCP & 8.406 & 32.662 & 1.563 & 7.340 & 62.658 & 1996.700 \\ 
  ENET & 7.791 & 14.281 & 2.693 & 5.047 & 10.478 & 326.043 \\ 
  MNET & 8.218 & 27.518 & 1.164 & 2.822 & 49.038 & 746.071 \\

  \\[-1.8ex]\hline
\hline \\[-1.8ex]
&\multicolumn{2}{c}{HD 1} 
&\multicolumn{2}{c}{HD 2}
&\multicolumn{2}{c}{HD 3} \\

\cmidrule(lr){2-3} 
\cmidrule(lr){4-5} 
\cmidrule(lr){6-7} 

$\rho=0.9$&$\rm MSE_y$ &$\rm MSE_{\beta}$ 
&$\rm MSE_y$ &$\rm MSE_{\beta}$ 
&$\rm MSE_y$ &$\rm MSE_{\beta}$ 
\\
\\[-1.8ex]\hline
\hline \\[-1.8ex]
  
  GRR & 4.311 & 18.157 & 2.156 & 10.813 & 8.325 & 185.210 \\ 
  RGRR & 0.672 & 3.627 & 0.640 & 7.000 & 6.240 & 54.178 \\ 
  LS & 0.676 & 3.600 & 0.766 & 6.471 & 8.215 & 55.966 \\ 
  PT & 1.120 & 4.847 & 0.911 & 7.496 & 8.325 & 56.210 \\ 
  SPT & 1.126 & 4.826 & 1.023 & 7.005 & 8.325 & 56.210 \\ 
  PS & 0.677 & 3.602 & 0.766 & 6.471 & 8.215 & 55.965 \\ 
  IPT & 0.671 & 3.622 & 0.655 & 6.960 & 8.215 & 55.965 \\ 
  \cmidrule(lr){2-7} 
  Ridge & 8.677 & 60.926 & 3.927 & 19.141 & 8.777 & 57.333 \\ 
  LASSO & 5.774 & 31.639 & 1.332 & 12.463 & 11.112 & 321.181 \\ 
  ALASSO & 4.167 & 24.054 & 3.163 & 52.811 & 46.857 & 1309.429 \\ 
  SCAD & 5.757 & 37.078 & 15.220 & 277.480 & 146.294 & 4051.477 \\ 
  MCP & 6.415 & 47.537 & 14.937 & 334.231 & 143.563 & 3344.942 \\ 
  ENET & 5.773 & 31.607 & 1.322 & 12.165 & 9.639 & 175.807 \\ 
  MNET & 6.090 & 44.426 & 2.894 & 19.139 & 57.465 & 465.671 \\ 

\hline
\end{tabular}
\end{adjustbox}
\end{table}

Also, the another proposed estimations are competitive for both $\rho$ values. For LD 2 case, the performances of SCAD and MCP are better while LS and the other shrinkage estimations are again competitive and more effective than ALASSO, LASSO, Ridge and ENET when $\rho=0.5$. In contrast, the performance of the suggested estimations are better than all others for first measure when $0.9$ while their performance may be less effective compare to second measure. For LD 3 case, the LS has minimum first measure for both $\rho$ values. 
ENET and MNET perform well. Overall, the proposed shrinkage estimators perform well in low-dimensional case. We turn the readers attention to the LD 2 and LD 3 cases. Although the SCAD and MCP perform the best among all, but the performance of shrinkage estimators is superior to LASSO, ALASSO and ENET. This is surprisingly interesting since the penalty estimators do variable selection and one may expect smaller error in prediction compared to shrinkage estimators. 

%\end{itemize}

In Table \ref{table:HD:ex}, we report the results of the examples by simulating $50$ data sets. We did not give LSE here since it does not exist in case of HD. The findings can be summarized as follows: For HD 1 case, we consider five strong signals and 95 noises. The results show that the proposed estimations outshines the others not only for both measures but also for both $\rho$ values. For HD 2 case, we consider ten more strong signals and 140 noises. Similar to the results of HD 1, we again see that our suggested methods perform well. For HD 3 case, we consider group correlations among some predictors. Even though the Ridge, LASSO and ENET are competitive, our proposed estimation methods outshine.  Overall, it can be safety concluded that the GRR and RGRR perform better than the Ridge, and we suggest to use the listed estimations when HD case.

\section{Real Data Applications}
\label{sec:RDA}
In this section, we illustrated two real data examples in order to investigate the performance of the suggested estimations strategies.

\subsection{Pollution Data}
We first consider the Pollution data set which is analyzed by \citet{McSc1973}. This data includes $p=15$ measurements on mortality rate and explanatory variables, which are air-pollution, socio-economic and meteorological, for $n=60$ US cities in 1960. The data are freely available from Carnegie Mellon University's StatLib \texttt{(http://lib.stat.cmu.edu/datasets/)}. In Table \ref{Tab:variables:pollution}, we listed variables.

\begin{table}[!htbp]
\begin{adjustbox}{width=.9\textwidth}
\small
\centering
\begin{tabular}{ll}
\hline
\textbf{Variables} & \textbf{Descriptions} \\
\hline \hline

\textbf{Dependent Variable} &\\
mort    &   Total age-adjusted mortality rate per 100.000\\
\hline \hline

\textbf{Covariates}  & \\
prec    &   Average annual precipitation in inches\\
jant    &   Average January temperature in degrees F\\
jult    &   Average July temperature in degrees F\\
humid   &   Annual average \% relative humidity at 1pm\\
ovr65   &   \% of 1960 SMSA population aged 65 or older\\
popn    &   Average household size\\
educ    &   Median school years completed by those over 22\\
hous    &   \% of housing units which are sound \& with all facilities\\
dens    &   Population per sq. mile in urbanized areas, 1960\\
nonw    &   \% non-white population in urbanized areas, 1960\\
wwdrk   &   \% employed in white collar occupations\\
poor    &   \% of families with income $< 3000$\\
hc      &   Relative hydrocarbon pollution potential of hydrocarbons\\
nox     &   Relative hydrocarbon pollution potential of nitric oxides\\
so2     &   Relative hydrocarbon pollution potential of sulphur dioxides\\
\hline 
\end{tabular}
\end{adjustbox}
\caption{Lists and Descriptions of Variables for Pollution data set
\label{Tab:variables:pollution}}
\end{table}

Since the prior information is not available here, the constraint on the parameters is usually either obtained through expert opinion or obtained by using
existing variable selection techniques, such as AIC or BIC, among others. In this example we use the Best Subset Selection (BSS) with ``one standard error" rule. It 
showed that prec, jant, educ, nonw, so2 are the most important co-variates. Hence, we construct the shrinkage techniques by using both the full-model and the candidate sub-model.

Our results are based on $250$ case re-sampled bootstrap samples. Since there is no noticeable variation for larger number of replications, we did not consider further values. We first split the data in two parts which are train and test sets. Then, we fit the model on the train set and  calculate $\rm MSE_y$ and $\rm MSE_{\beta}$ and prediction error ($\rm PE = \mathbb{Y}_{test}-\mathbb{\widehat{Y}}_{test}$) by using test set based on 10-fold CV for each bootstrap replicate. Note that the predictors were first standardized to have zero mean and unit standard deviation, and the response variable is centered before fitting the model. The results are shown in Table \ref{tab:pol:res}.

\begin{table}[!htbp]
\begin{adjustbox}{width=1\textwidth}
\centering
\begin{tabular}{rcccccccc}
  \hline
 & prec & jant & educ & nonw & so2 & $\rm RMSE_y$ & $\rm RMSE_{\beta}$ & \rm RPE \\ 
  \hline
  GRR & 14.919(0.110) & -16.394(0.129) & -13.547(0.155) & 41.659(0.142) & 2.518(0.154) & 1.000 & 1.000 & 1.000 \\ 
  RGRR & 11.829(0.094) & -16.945(0.076) & -12.376(0.104) & 37.702(0.101) & 10.896(0.211) & 3.586 & 41.698 & 1.236 \\ 
  LS & 11.829(0.094) & -16.925(0.076) & -12.366(0.104) & 37.694(0.101) & 10.896(0.211) & 3.591 & 41.768 & 1.236 \\ 
  PT & 12.055(0.106) & -16.945(0.087) & -12.376(0.107) & 37.851(0.110) & 9.845(0.224) & 2.031 & 5.467 & 1.127 \\ 
  SPT & 12.055(0.106) & -16.930(0.087) & -12.364(0.107) & 37.851(0.109) & 9.845(0.224) & 2.032 & 5.468 & 1.127 \\ 
  PS & 13.004(0.093) & -16.695(0.090) & -12.806(0.120) & 38.987(0.107) & 6.577(0.179) & 2.132 & 3.988 & 1.155 \\ 
  IPT & 12.033(0.103) & -17.003(0.084) & -12.376(0.105) & 37.844(0.107) & 9.910(0.218) & 2.611 & 10.033 & 1.177 \\ 
  \cmidrule(lr){2-9} 
  LSE & 19.524(0.115) & -19.875(0.154) & -15.514(0.188) & 39.685(0.157) & 4.098(0.147) & 0.591 & 0.542 & 0.837 \\ 
  Ridge & 17.959(0.069) & -14.231(0.087) & -7.939(0.095) & 33.118(0.087) & 14.702(0.049) & 1.694 & 15.126 & 1.123 \\ 
  LASSO & 19.495(0.115) & -19.726(0.151) & -15.076(0.188) & 39.760(0.155) & 4.856(0.147) & 0.619 & 0.609 & 0.851 \\ 
  ALASSO & 19.345(0.117) & -19.656(0.150) & -15.305(0.185) & 39.902(0.155) & 4.320(0.156) & 0.614 & 0.582 & 0.849 \\ 
  SCAD & 19.647(0.124) & -19.891(0.161) & -15.665(0.194) & 39.716(0.165) & 4.263(0.150) & 0.519 & 0.530 & 0.786 \\ 
  MCP & 19.642(0.124) & -19.881(0.161) & -15.629(0.194) & 39.756(0.165) & 4.258(0.150) & 0.521 & 0.532 & 0.787 \\ 
  ENET & 19.511(0.114) & -19.758(0.151) & -15.095(0.189) & 39.750(0.155) & 4.827(0.147) & 0.618 & 0.607 & 0.850 \\ 
  MNET & 19.635(0.123) & -19.825(0.159) & -15.401(0.191) & 39.688(0.164) & 4.491(0.145) & 0.553 & 0.575 & 0.808 \\ 
   \hline
\end{tabular}
\end{adjustbox}
\caption{Estimate and standard error (in the parenthesis) for only significant coefficients for the pollution data. The last three column gives the relative $\rm MSE_y$, $\rm MSE_{\beta}$ and $\rm PE$, respectively, based on bootstrap simulation with respect to the GRR. If one of them is larger than one, then it is superior to the GRR.}
\label{tab:pol:res}
\end{table}

As expected, the performance of the RGRR is the best since the data is re-sampled from an empirical distribution where the candidate subspace is nearly true. Also, the proposed estimators are superior to the full model LSE and penalty estimators, which is in agreement with our theoretical and simulation results.

\subsection{Eye Data}

This data set contains gene expression data of mammalian eye tissue samples, \citet{Scheetz-et-al}. The format is a list containing the design matrix which represents the data of $n=120$ rats with $p=200$ gene probes and the response vector with 120 dimensional which represents the expression level of TRIM32 gene.

\begin{table}[!htbp]
\begin{adjustbox}{width=1\textwidth}
\centering
\begin{tabular}{cccccccc}
  \hline
 &$\rm MSE_y$ & $\rm MSE_{\beta}$ & \rm PE &$\rm RMSE_y$ & $\rm RMSE_{\beta}$ & \rm RPE \\ 
  \hline
  GRR & 0.00225(5e-05) & 0.00889(5e-05) & 0.00851(0.00011) & 1.000 & 1.000 & 1.000 \\ 
  RGRR & 0.00078(2e-05) & 0.00238(4e-05) & 0.00592(7e-05) & 2.888 & 3.735 & 1.437 \\ 
  LS & 0.00078(2e-05) & 0.00236(3e-05) & 0.00597(7e-05) & 2.903 & 3.775 & 1.424 \\ 
  PT & 0.00173(5e-05) & 0.00615(0.00015) & 0.00762(0.00011) & 1.298 & 1.447 & 1.117 \\ 
  SPT & 0.00173(5e-05) & 0.00614(0.00015) & 0.00763(0.00011) & 1.302 & 1.449 & 1.115 \\ 
  PS & 0.00078(2e-05) & 0.00237(3e-05) & 0.00599(7e-05) & 2.887 & 3.757 & 1.419 \\ 
  IPT & 0.00078(2e-05) & 0.00237(3e-05) & 0.00597(7e-05) & 2.872 & 3.748 & 1.425 \\ 
  \cmidrule(lr){2-7} 
  Ridge & 0.0054(8e-05) & 0.0246(1e-04) & 0.01134(0.00015) & 0.417 & 0.361 & 0.750 \\ 
  LASSO & 0.00304(8e-05) & 0.01427(1e-04) & 0.00884(0.00014) & 0.741 & 0.623 & 0.963 \\ 
  ALASSO & 0.00304(8e-05) & 0.01289(0.00016) & 0.00919(0.00014) & 0.742 & 0.690 & 0.926 \\ 
  SCAD & 0.00469(0.00016) & 0.01478(0.00036) & 0.01155(0.00023) & 0.480 & 0.601 & 0.737 \\ 
  MCP & 0.00617(0.00022) & 0.02041(0.00039) & 0.01329(3e-04) & 0.365 & 0.436 & 0.640 \\ 
  ENET & 0.00304(8e-05) & 0.01426(1e-04) & 0.00884(0.00014) & 0.741 & 0.623 & 0.963 \\ 
  MNET & 0.00589(2e-04) & 0.02031(0.00041) & 0.01286(0.00028) & 0.382 & 0.438 & 0.661 \\ 
   \hline
\end{tabular}
\end{adjustbox}
\caption{For Eyedata set, the $\rm MSE_y$, $\rm MSE_{\beta}$ and $\rm PE$ and their relative performances based on bootstrap simulation with respect to the GRR.}
\label{tab:eye:res}
\end{table}

We re-sampled bootstrap samples $50$ times. The candidate sub-model is determined by LASSO method, also tuning parameter is selected via ``one standard error" rule. According to this method, the candidate sub-model has 21 important covariates. Similar to first example, we split data in two sets. We calculate $\rm MSE_y$ and $\rm MSE_{\beta}$ and $\rm PE$ based on 5-fold CV for each bootstrap replicate. Again the predictors were first standardized to have zero mean and unit standard deviation, and the response variable is centered before fitting the model. The results are shown in Table \ref{tab:eye:res}. Our results confirm that the suggested estimations outperform the existing penalty methods.

%\clearpage
\section{Conclusions}
\label{sec:con}
In this paper, we developed the so-called shrinkage estimators for the regression coefficients using the generalized ridge regression estimator. The general setup included both low-dimensional ($p<n$) and high-dimensional ($p>n$) cases. For our purpose, we first defined a general shrinkage estimator using the Borel measurable function of test statistic for testing $H_0:\bH\bbeta=\boldsymbol{0}$. Then, some specific practical choices considered and their properties obtained. For comparison sake, we also considered some penalty estimators in our study. An extensive Monte Carlo simulation study conducted to compare the performance of the proposed shrinkage ridge estimators with each others and penalty estimators. It can be understood, from the simulation results, that the linear shrinkage estimator surprisingly performs the best in both prediction and error of estimation senses, i.e., it has smallest MSE values, compared to all other estimators. With focus on high-dimensional case, although the penalty estimators do variable selection, but the proposed shrinkage estimators have smaller prediction error. IN conclusion to this, if the purpose of estimation is having smaller MSE rather than variable selection, we suggest to use shrinkage strategies instead of penalty estimators. This result may come to mind a little doubtful. For this reason, we also analyzed the performance of the proposed shrinkage estimators in two real examples to include both low and high-dimensional settings. Nearly the same superior results obtained as discussed in the simulation. According to the results of Tables 4 \& 5, in the performance comparison between shrinkage and penalty estimators, it is seen that the proposed shrinkage estimators are superior when we combine the information of full-model and sub-model. 

\section{Appendix}
%\noindent\textbf{Proof of Theorem 3.1}
\begin{proof}[Proof of Theorem~\ref{bias}]

Since all of the pronounced estimators is a special case of $\hbbeta_n^{\rm Shrinkage}$, we give the bias of this estimator here. Then, the proof follows by applying relevant $g(\cdot)$ function in each estimator. Hence, we have
\begin{eqnarray}\label{eq31}
{\rm\bB}(\hbbeta_n^{\rm Shrinkage})&=&{\rm\bB}(\hbbeta_n^{\rm GRR})-\mathbb{E}\left[(\hbbeta_n^{\rm GRR}-(\hbbeta_n^{\rm RGRR})')g(\mathscr{W}_{n})\right]\cr
&=&{\rm\bB}(\hbbeta_n^{\rm GRR})-(I_p-\bM_{\bK})\mathbb{E}\left[\hbbeta_n^{\rm GRR}g(\mathscr{W}_{n})\right]
\end{eqnarray}
Since $\hbbeta^{\rm GRR}=\bS_{\bK}^{-1}\bS\bS^{-1}\bX'\bY=\bS_{\bK}^{-1}\bS\hbbeta^{\rm LSE}$, the bias is obtained directly using $\mathbb{E}(\hbbeta^{\rm LSE})=\bbeta$ as ${\rm\bB}(\hbbeta_n^{\rm GRR})=(\bS_{\bK}^{-1}\bS-\bI_p)\bbeta$. Using Theorem 1 in Appendix B of \citet{JB1978}, 
\begin{eqnarray}\label{eq32}
\mathbb{E}\left[\hbbeta_n^{\rm GRR}g(\mathscr{W}_{n})\right]&=&\bS_{\bK}^{-1}\bS
\mathbb{E}\left[\hbbeta_n^{\rm LSE}g(\mathscr{W}_{n})\right]\cr
&=&\bS_{\bK}^{-1}\bS\bbeta\mathbb{E}\left[g(\mathscr{F}_{q+2,m}(\Delta^2))\right]
\end{eqnarray}
Substituting \eqref{eq32} in \eqref{eq31} gives
\begin{eqnarray*}
{\rm\bB}(\hbbeta_n^{\rm Shrinkage})&=&(\bS_{\bK}^{-1}\bS-\bI_p)\bbeta-(\bI_p-\bM_{\bK})\bS_{\bK}^{-1}\bS\bbeta\mathbb{E}\left[g(\mathscr{F}_{q+2,m}(\Delta^2))\right]
\end{eqnarray*}

\end{proof}

%\noindent\textbf{Proof of Theorem 3.2}
\begin{proof}[Proof of Theorem~\ref{risk}]

Similar to the proof of Theorem \ref{bias}, we provide the quadratic risk of the shrinkage estimator $\hbbeta_n^{\rm Shrinkage}$ here. Then, the proof follows by applying relevant $g(\cdot)$ function in each estimator. Hence, we have
\begin{eqnarray}\label{eq34}
{\rm R}(\hbbeta_n^{\rm Shrinkage})&=&
{\rm R}(\hbbeta_n^{\rm GRR})-2\mathbb{E}\left[(\hbbeta_n^{\rm GRR}-\bbeta)'\bW(\hbbeta_n^{\rm GRR}-\hbbeta_n^{\rm RGRR})g(\mathscr{W}_n)\right]\cr
&&+\mathbb{E}\left[(\hbbeta_n^{\rm GRR}-\hbbeta_n^{\rm RGRR})'\bW(\hbbeta_n^{\rm GRR}-\hbbeta_n^{\rm RGRR})g^2(\mathscr{W}_n)\right].
\end{eqnarray}
where ${\rm R}(\hbbeta_n^{\rm GRR})=\mathbb{E}\left[(\hbbeta_n^{\rm GRR}-\bbeta)'\bW(\hbbeta_n^{\rm GRR}-\bbeta)\right]$. 
Using $\hbbeta^{\rm GRR}=\bS_{\bK}^{-1}\bS\hbbeta^{\rm LSE}$, we have
\begin{eqnarray}\label{eq35}
{\rm R}(\hbbeta_n^{\rm GRR})&=&\mathbb{E}\left[(\hbbeta_n^{\rm LSE}-\bbeta)'\bS\bS_{\bK}^{-1}\bW\bS_{\bK}^{-1}\bS(\hbbeta_n^{\rm LSE}-\bbeta)\right]\cr
&&+2\mathbb{E}\left[(\hbbeta_n^{\rm LSE}-\bbeta)'\bS\bS_{\bK}^{-1}\bW(\bS_{\bK}^{-1}\bS-\bI_p)\bbeta\right]\cr
&&+\bbeta'(\bS_{\bK}^{-1}\bS-\bI_p)\bW(\bS\bS_{\bK}^{-1}-\bI_p)\bbeta\cr
&=&\sigma^2{\rm tr}\left(\bS_{\bK}^{-1}\bS\bS_{\bK}^{-1}\bW\right)+\bbeta'(\bS_{\bK}^{-1}\bS-\bI_p)\bW(\bS\bS_{\bK}^{-1}-\bI_p)\bbeta
\end{eqnarray}
On the other hand, using Theorems 1 \& 3 in Appendix B of \citet{JB1978}, yields
\begin{eqnarray}\label{eq36}
\mathbb{E}\left[(\hbbeta_n^{\rm GRR}-\bbeta)'\bW(\hbbeta_n^{\rm GRR}-(\hbbeta_n^{\rm RGRR})')g(\mathscr{W}_n)\right]&=&\mathbb{E}\left[(\hbbeta_n^{\rm LSE})'\bB\hbbeta_n^{\rm LSE}g(\mathscr{W}_n)\right]\cr
&&-\bbeta'\bW(\bI_p-\bM_{\bK})\bS_{\bK}^{-1}\bS\mathbb{E}\left[\hbbeta_n^{\rm LSE}g(\mathscr{W}_n)\right]\cr
&=&{\rm tr}(\bB)\mathbb{E}[g(F_{q+2,m}(\Delta^2))]\cr
&&+\bbeta'\bB\bbeta\mathbb{E}[g(F_{q+4,m}(\Delta^2))]\cr
&&-\bbeta'\bW(\bI_p-\bM_{\bK})\bS_{\bK}^{-1}\bS\bbeta\mathbb{E}\left[g(F_{q+2,m}(\Delta^2))\right]
\end{eqnarray}
where $\bB=\bS\bS_{\bK}^{-1}\bW(\bI_p-\bM_{\bK})\bS_{\bK}^{-1}\bS$. Finally
\begin{eqnarray}\label{eq37}
\mathbb{E}\left[(\hbbeta_n^{\rm GRR}-(\hbbeta_n^{\rm RGRR})')'\bW(\hbbeta_n^{\rm GRR}-(\hbbeta_n^{\rm RGRR})')g^2(\mathscr{W}_n)\right]&=&\mathbb{E}\left[(\hbbeta_n^{\rm LSE})'\bC\hbbeta_n^{\rm LSE}g^2(\mathscr{W}_n)\right]\cr
&=&{\rm tr}(\bC)\mathbb{E}[g^2(F_{q+2,m}(\Delta^2))]\cr
&&+\bbeta'\bC\bbeta\mathbb{E}[g^2(F_{q+4,m}(\Delta^2))]
\end{eqnarray}
where $\bC=\bS\bS_{\bK}^{-1}(\bI_p-\bM_{\bK})\bW(\bI_p-\bM_{\bK})\bS_{\bK}^{-1}\bS$.

Combining \eqref{eq35}-\eqref{eq37}, gives
\begin{eqnarray*}
{\rm R}(\hbbeta_n^{\rm Shrinkage})&=&\sigma^2{\rm tr}\left(\bS_{\bK}^{-1}\bS\bS_{\bK}^{-1}\bW\right)+\bbeta'(\bS_{\bK}^{-1}\bS-\bI_p)\bW(\bS\bS_{\bK}^{-1}-\bI_p)\bbeta\cr
&&-2{\rm tr}(\bB)\mathbb{E}[g(F_{q+2,m}(\Delta^2))]-2\bbeta'\bB\bbeta\mathbb{E}[g(F_{q+4,m}(\Delta^2))]\cr
&&+2\bbeta'\bW(\bI_p-\bM_{\bK})\bS_{\bK}^{-1}\bS\bbeta\mathbb{E}\left[g(F_{q+2,m}(\Delta^2))\right]\cr
&&+{\rm tr}(\bC)\mathbb{E}[g^2(F_{q+2,m}(\Delta^2))]+\bbeta'\bC\bbeta\mathbb{E}[g^2(F_{q+4,m}(\Delta^2))]
\end{eqnarray*}
\end{proof}

\section*{Acknowledgments}
Second author Mohammad Arashi's work is based on the research supported in part by the National Research Foundation of South Africa (Grant NO. 109214). Third author S. Ejaz Ahmed is supported by the Natural Sciences and the Engineering Research Council of Canada (NSERC).

%\clearpage

\end{document}